\def\Ddots{\mathinner{\mkern1mu\raise\p@
\vbox{\kern7\p@\hbox{.}}\mkern2mu
\raise4\p@\hbox{.}\mkern2mu\raise7\p@\hbox{.}\mkern1mu}}
\def\Xint#1{\mathchoice
{\XXint\displaystyle\textstyle{#1}}%
{\XXint\textstyle\scriptstyle{#1}}%
{\XXint\scriptstyle\scriptscriptstyle{#1}}%
{\XXint\scriptscriptstyle\scriptscriptstyle{#1}}%
\!\int}
\def\XXint#1#2#3{{\setbox0=\hbox{$#1{#2#3}{\int}$}
\vcenter{\hbox{$#2#3$}}\kern-.5\wd0}}
\def\dashint{\Xint-}
\newtheorem{theorem}{Theorem}[section]
\newtheorem{corollary}[theorem]{Corollary}
\theoremstyle{definition}
\newtheorem{lemma}[theorem]{Lemma}
\newtheorem{remark}[theorem]{Remark}
\def\ep{{\epsilon}}
\def\al{{\alpha}}
\def\R{\mathbb R}
\def\N{\mathbb N}
\def\C{\mathbb C}
\def\Z{\mathbb Z}
\def\Sn{\mathbb S}
\def\ra{\rightarrow}
\def\vp{\varphi}
\def\bey{\begin{eqnarray*}}
\def\eey{\end{eqnarray*}}
\begin{document}

\title[]{Pointwise estimates for rough operators with applications to weighted Sobolev inequalities}
\author{Cong Hoang}

\address{Cong Hoang \\
 Department of Mathematics \\
Florida Agricultural and Mechanical University \\
 Tallahassee, FL 32307, USA}

\email{cong.hoang@famu.edu}

\author{Kabe Moen}

\address{Kabe Moen \\
 Department of Mathematics \\
 University of Alabama \\
 Tuscaloosa, AL 35487, USA}

\email{kabe.moen@ua.edu}

\author{Carlos P\'erez }

\address{Carlos P\'erez \\
 Department of Mathematics \\
 University of the Basque Country \\
 Ikerbasque and BCAM \\
 Bilbao, Spain}

\email{cperez@bcamath.org}


\maketitle

 \begin{abstract} We investigate weighted Sobolev inequalities for rough operators.  We prove that several operators satisfy a pointwise bound by the Riesz potential applied to the gradient.  From this inequality we derive several new Sobolev type inequalities with an operator on the left hand side.  

\end{abstract}

\section{Introduction} 
\subsection{Motivation}
The classical Gagliardo-Nirenberg-Sobolev inequality in $\R^n$ for $n\geq 2$ is given by
\begin{equation}\label{GNSineq} \|f\|_{L^{p^*}(\R^n)}\leq C\|\nabla f\|_{L^p(\R^n)}\end{equation}
where $1\leq p<n$, $p^*=\frac{np}{n-p}$, and $f\in C^\infty_c(\R^n)$.  Inequality \eqref{GNSineq} is essential in the study of PDE, and in fact implies the Sobolev embedding $\dot W^{1,p}(\R^n)\hookrightarrow L^{p^*}(\R^n)$. One approach to proving \eqref{GNSineq}, probably the most common, is to use the well-known pointwise estimate 
\begin{equation}\label{represent} |f(x)|\leq {\color{blue} c_n} \,I_1(|\nabla f|)(x)\end{equation} 
(see \cite{Sal, GT, KLV}) where $I_1$ is the Riesz potential operator of order $\al=1$ given by 
$$I_\al f(x)=\int_{\R^n}\frac{f(y)}{|x-y|^{n-\al}}\,dy, \quad 0<\al<n,$$
combined with the boundedness of $I_1$ from $L^p(\R^n)$ into $L^{p^*}(\R^n)$, $1<p<n$.
%
%
A relevant extension of \eqref{GNSineq} is due to Muckenhoupt and Wheeden \cite{MW} 
who showed that 
\begin{equation}\label{fracbdd}\|wI_1 f\|_{L^{p^*}(\R^n)}\leq c\,\|wf\|_{L^p(\R^n)}, \end{equation}
holds if and only if $w\in A_{p,p^*}$
$$[w]_{A_{p,p^*}}=\sup_Q \left(\dashint_Q w^{p^*} \right) \left(\dashint_Q w^{-p'} \right)^{\frac{p^{*}}{p'}}<\infty.$$
By the pointwise bound \eqref{represent} and the boundedness of $I_1$ from \eqref{fracbdd} we have the following weighted Gagliardo-Nirenberg-Sobolev inequality
\begin{equation}\label{oneweightSob}
\|wf\|_{L^{p^*}(\R^n)}\leq C\|w\nabla f\|_{L^{p}(\R^n)}, \quad w\in A_{p,p^*}
\end{equation}
in the range $1<p<n$. 

To derive another extension of \eqref{GNSineq} we consider the usual Hardy-Littlewood maximal function defined by 
$$Mf(x)=\sup_{Q\ni x} \dashint_Q |f(y)|\,dy.$$
The action of operators on Sobolev spaces began with Kinnunen \cite{Kin} who showed that $M:W^{1,p}(\R^n)\ra W^{1,p}(\R^n)$ for $1<p<\infty$. An easier mapping property, which simply follows from inequality \eqref{GNSineq} and the boundedness of $M$ on $L^{p^*}(\R^n)$ since $p^*>1$, is the fact that $M:\dot W^{1,p}(\R^n)\ra L^{p^*}(\R^n).$  Consider the weighted version,
\begin{equation}\label{maxSob}\|wMf\|_{L^{p^*}(\R^n)}\leq C\|w\nabla f\|_{L^p(\R^n)}.\end{equation}
To prove \eqref{maxSob} we could use Muckenhoupt's $A_p$ theorem to conclude that $M$ is bounded on $L^{p^{*}}(w^{p^{*}})$ since $w\in A_{p,p^*}$ implies that $w^{p^*}\in A_{p^*}$ (see Section \ref{oneweight}) and then apply inequality \eqref{oneweightSob}.  However, we can take a different approach by proving a stronger pointwise estimate using the theory of $A_1$ weights. Recall a weight $w$ belongs to the class $A_1$ if
$$Mw\leq Cw.$$  
where the smallest constant $C$ is denoted $[w]_{A_1}$. It is well-known that power weights of the form $|x|^{-a}$, for $0<a<n$, are prototypical examples of $A_1$ weights.  The operator $I_1f$, being a convolution with an $A_1$ weight, is also an $A_1$ weight when $f\geq 0$. This observation was noticed by Sawyer \cite{Saw1} but we include a proof (see also Theorem \ref{A1thm}).  Indeed, for any convolution operator we have
$$M(f*\vp)\leq |f|*(M\vp)$$
and hence if $\vp\in A_1$ and $f\geq 0$, then $[f*\vp]_{A_1}\leq [\vp]_{A_1}.$ With this fact in hand we obtain the following improvement of inequality \eqref{represent}:
\begin{equation}\label{pointmax}Mf(x)\leq CM\big(I_1(|\nabla f|)\big)(x)\leq C I_1(|\nabla f|)(x), \end{equation}
where the constant depends only on $n$.  Again, the boundedness of $I_1$ from \eqref{fracbdd} and the pointwise inequality \eqref{pointmax} clearly imply \eqref{maxSob} which also improves \eqref{oneweightSob} since $Mf\geq |f|$.   Iterating \eqref{pointmax} leads to the pointwise bound

\begin{equation}\label{maxiteratedptwise}M^kf(x)\leq C_kI_1(|\nabla f|)(x), \end{equation}
for any $k\in \N$ and the inequality
\begin{equation}\label{maxiteratedSob}\|wM^kf\|_{L^{p^*}(\R^n)}\leq C_k\|w\nabla f\|_{L^p(\R^n)}, \end{equation}
which further improves  \eqref{oneweightSob}. This method has the advantage that it yields a better dependence on $[w]_{A_{p,p^*}}$ (see Section \ref{oneweight}).  


There is yet another way to improve inequality \eqref{maxSob}, hence also inequality \eqref{oneweightSob}, which requires two additional maximal functions. The sharp maximal function is defined by
$$M^\#f(x)=\sup_{Q\ni x}\dashint_Q |f-f_Q|$$
where $f_Q=\dashint_Q f$ and the fractional maximal function
$$M_\al f(x)=\sup_{Q\ni x} \ell(Q)^\al \dashint_Q |f|, \quad 0\leq \al<n.$$
The operator $M^\#$ plays a central role in the study of Calder\'on-Zygmund operators, while $M_\al$ is the natural maximal function that is related to the Riesz potential $I_\al$. The $(1,1)$ Poincar\'e inequality 
$$\dashint_Q|f-f_Q|\leq C\ell(Q)\left(\dashint_Q |\nabla f|\right)$$
holds for all cubes $Q$ and leads to the pointwise inequality
\begin{equation}\label{sharp} M^\#f(x)\leq CM_1(|\nabla f|)(x)\end{equation}
(this observation  can also be found in \cite{Tor}). The operator $M_1$ is pointwise smaller than $I_1$ and thus shares the same boundedness properties. While the pointwise bound $M^\# f\leq 2Mf$ holds, Fefferman and Stein \cite{FS} showed that the reverse inequality holds in norm, namely, when $0<p<\infty$ and $w\in A_\infty$
\begin{equation}\label{FS} \|Mf\|_{L^p(w)}\leq C\|M^\#f\|_{L^p(w)}.\end{equation}
Combining inequalities \eqref{sharp} and \eqref{FS} yields the following new inequality
\begin{equation}\label{FSgrad} \|Mf\|_{L^p(w)}\leq C\|M_1(|\nabla f|)\|_{L^p(w)}\end{equation}
for $0<p<\infty$ and $w\in A_\infty$. Inequality \eqref{FSgrad} also implies the weighted maximal operator Sobolev inequality \eqref{maxSob}, albeit with a worse dependence on the constant $[w]_{A_{p,p^*}}$, since both $w^{p^*}\in A_\infty$ and $M_1:L^p(w^p)\ra L^{p^*}(w^{p^*})$ hold when $w\in A_{p,p^*}$.  We also remark that \eqref{FSgrad} could be derived via a Coifman-Fefferman 
type estimate between $I_1$ and $M_1$ (see Section \ref{oneweight}).

Inequality \eqref{FSgrad} also has some other interesting consequences. The Hardy-Littlewood maximal function controls several operators, including Calder\'on-Zygmund operators, via the Coifman-Fefferman inequality (see \cite{CF})
\begin{equation}\label{CoifFeff}\|Tf\|_{L^p(w)}\leq C\|Mf\|_{L^p(w)}, \quad w\in A_\infty,\ 0<p<\infty.\end{equation}
Inequalities \eqref{FS} and \eqref{CoifFeff} are related and hold for more general weights than $A_\infty$, known as $C_p$ weights, although a complete characterization is still open (see Lerner \cite{Ler}). Hence if $w\in A_\infty$, $0<p<\infty$, and $T$ is a Calder\'on-Zygmund operator, by combining \eqref{FSgrad} and \eqref{CoifFeff} we have
  \begin{equation}\label{CFgrad} \|Tf\|_{L^p(w)}\leq C\|M_1(|\nabla f|)\|_{L^p(w)}, \quad w\in A_\infty, \ \ 0<p<\infty.\end{equation}
Inequality \eqref{CFgrad} again implies the Sobolev bound for Calder\'on-Zygmund operators
\begin{equation}\label{CZOSob}
\|wTf\|_{L^{p^*}(\R^n)}\leq C\|w\nabla f\|_{L^p(\R^n)}
\end{equation}
when $w\in A_{p,p^*}$.  

This method works for many other operators $T$ satisfying \eqref{CoifFeff}:
\begin{itemize}
\item weakly strongly singular integral operators, as considered by C.~Fefferman \cite{fefferman} (see \cite{AHP,CLPR});\\
\item pseudo-differential operators---more precisely, pseudo-differential operators in the H\"ormander class \cite{hormander67} (see \cite{AHP,CLPR});\\ 
\item oscillatory integral operators of the kind introduced by Phong and Stein \cite{phong-stein85} (see \cite{AHP,CLPR});\\ 
\item square functions $g_\lambda^*$, (see \cite{CP});\\
\item rough operators $T_{\Omega}$ with $\Omega\in L^\infty(\mathbb S^{n-1})$ or even for $B_{(n-1)/2}$, the Bochner--Riesz multiplier at the critical index $(n-1)/2$ (see \cite{LPRR});\\
\end{itemize}
from which  \eqref{CZOSob} holds for any $T$ as above.

A variation of the above procedure can be done for commutators of singular integrals with BMO functions. 
Indeed, it was shown in \cite{Per97} that 
$$\|T^k_b f\|_{L^p(w)}\leq C\|M^{k+1}f\|_{L^p(w)}$$ 
where $T_b^k$  is the higher order commutator of a Calder\'on-Zygmund operator with a BMO function $b$
and  $w\in A_\infty$ and $0<p<\infty$.  The case $k=0$ corresponds to \eqref{CoifFeff}.
Using \eqref{maxiteratedSob} we have the Sobolev inequality for commutators as in \eqref{CZOSob}, namely
\begin{equation*}
\|wT^k_b f\|_{L^{p^*}(\R^n)}\leq C\|w\nabla f\|_{L^p(\R^n)}
\end{equation*}
when $w\in A_{p,p^*}$ and $p>1$.

\subsection{The main operators and the main result.} 

The purpose of this paper is to study Sobolev inequalities such as \eqref{CZOSob} with different operators on the lefthand side but with more precise bounds in terms of the $A_{p,p^{*}}$ constants. This will be  the case of several classical smooth convolution singular integrals but we will be considering operators beyond Calder\'on-Zygmund operators like the so-called rough singular integral operators.  These are operators 
of the form

\begin{equation*}\label{roughSIdef} T_\Omega f(x)=p.v.\int_{\R^n}\frac{\Omega(y)}{|y|^n}f(x-y)\,dy\end{equation*}
where $\Omega$ is a degree zero homogeneous function belonging to $L^1(\mathbb S^{n-1})$ with $\int_{\mathbb S^{n-1}}\Omega \,d\sigma=0$.  The corresponding maximal operator is given by
$$M_\Omega f(x)=\sup_{t>0}\frac{1}{t^n}\int_{|y|<t}|\Omega(y)f(x-y)|\,dy$$
where $\Omega$ is as above, but does not necessarily have mean zero on $\Sn^{n-1}$. The study of rough singular integrals was initiated by Calder\'on and Zygmund in \cite{CZ}. For a general $\Omega$, these operators fall outside the realm of Calder\'on-Zygmund theory due to the lack of kernel smoothness.  When $\Omega\in L^r(\mathbb S^{n-1})$ for $r>1$, or less restrictive assumptions, the $L^p(\R^n)$ boundedness of $T_\Omega$ follows from the method of rotations or Littlewood-Paley theory combined with Fourier analytic estimates.  The weak endpoint estimates are significantly more difficult and have been studied by several authors including Christ \cite{Cr}, Hofmann \cite{H}, Christ and Rubio de Francia \cite{CRdF}, and Seeger \cite{Se}.  Weighted estimates for rough operators are also delicate and of interest to several mathematicians including Duoandidoetxea and Rubio de Francia \cite{DRdF}, Duoandikoetxea \cite{Duo}, Vargas \cite{Var}, and Watson \cite{Wat}. In particular, Watson \cite{Wat} showed that if $\Omega\in L^r(\mathbb S^{n-1})$ has zero average, then
\begin{equation}\label{weightrough}T_\Omega:L^q(w)\ra L^q(w), \quad q>r', \ \ w\in A_{\frac{q}{r'}}.\end{equation}

Consider the inequality
\begin{equation}\label{onewTOmegaSob}\|wT_\Omega f\|_{L^{p^*}(\R^n)}\leq C\|w\nabla f\|_{L^p(\R^n)}\end{equation}
when $w\in A_{p,p^*}$. If $\Omega\in L^n(\Sn^{n-1})$ and $1<p<n$ we have $n'<p^*<\infty$ and $w\in A_{p,p^*}$ implies $w^{p^*}\in A_{1+\frac{p^*}{p'}}=A_{\frac{p^*}{n'}}$. By the boundedness \eqref{weightrough} we have $T_{\Omega}:L^{p^*}(w^{p^*})\ra L^{p^*}(w^{p^*})$.  Combining this with the embedding $\dot W^{1,p}(w^p)\hookrightarrow L^{p^*}(w^{p^*})$ from \eqref{oneweightSob} we obtain inequality \eqref{onewTOmegaSob}.  It is not clear whether we can obtain the inequality \eqref{CFgrad} for $T_\Omega$ when $\Omega\in L^n(\mathbb S^{n-1})$ because the version of Coifman-Fefferman inequality (11) has been shown recently but only when $\Omega\in L^\infty(\Sn^{n-1})$ (see \cite{CCDPO,LPRR}).

For an operator $T$, the two weight Sobolev inequality
\begin{equation}\label{twoSISob} \|T f\|_{L^q(u)}\leq C\|\nabla f\|_{L^p(v)}\end{equation}
is also very challenging to prove because the previous approaches do not readily extend to \eqref{twoSISob} without strong \emph{a-priori} assumptions on the individual weights such as $u\in A_q$ or $u\in A_\infty$. By the pointwise bound \eqref{pointmax}, the two weight Sobolev inequality for the Hardy-Littlewood maximal operator,
$$\|Mf\|_{L^q(u)}\leq C\|\nabla f\|_{L^p(v)}$$ 
will hold whenever $I_1$ is bounded from $L^p(v)$ to $L^q(u)$. This may include spaces where the maximal function is unbounded on the individual space $L^q(u)$.  Instead we show that for mild conditions on $\Omega$, the pointwise inequality \eqref{represent} can be extended to $T_\Omega$, i.e.,
\begin{equation*}|T_\Omega f(x)|\leq CI_1(|\nabla f|)(x).\end{equation*}

The pointwise bound by $I_1(|\nabla f|)$ is natural and holds for several of the classical operators in harmonic analysis. First, it holds for the Hardy-Littlewood maximal operator \eqref{pointmax} and its iterates \eqref{maxiteratedptwise}. In the complex plane, consider the Cauchy transform 
$$Cf(z)=\frac{1}{\pi}\int_\C\frac{f(\zeta)}{z-\zeta}\,dA(\zeta)$$
and the Ahlfors-Beurling operator
$$Sf(z)=-\frac{1}{\pi}p.v.\int_\C\frac{f(\zeta)}{(z-\zeta)^2}\,dA(\zeta).$$
The operators $S$ and $C$ are related via the equality
$$S=\partial \circ C=C\circ \partial$$
where $\partial$ is the complex derivative $\partial=\tfrac12(\partial_x-i\partial_y)$
(see the book by Astala, Iwaniec, and Martin \cite[Chapter 4]{AIM}).  The Riesz potential of a complex variable ($n=2$ and $\al=1$) is given
$$I_1f(z)=\int_{\C}\frac{f(\zeta)}{|z-\zeta|}\,dA(\zeta).$$
Then
\begin{multline*}|Sf(z)|=\big|C\big(\partial f\big)(z)\big|=\frac1\pi\left|\int_{\C}\frac{\partial f(\zeta)}{z-\zeta}\,dA(\zeta)\right|\\
\leq \frac1\pi\int_{\C}\frac{|\partial f(\zeta)|}{|z-\zeta|}\,dA(\zeta)=CI_1(|\partial f|)(z)\leq CI_1(|D f|)(z)\end{multline*}
where
$$|D f|^2=|{\partial_x u}|^2+|{\partial_y u}|^2+|{\partial_x  v}|^2+|{\partial_y v}|^2$$
for $u=\mathsf{Re}\, f$ and $v=\mathsf{Im}\, f$.  In particular, if $f$ is real valued then $|{\partial f}|=\frac12|\nabla f|$ and hence \begin{equation}\label{Beur} |Sf(z)|\leq CI_1(|\nabla f|)(z).\end{equation} 

In $\R^n$ consider the Riesz transforms
$$R_jf(x)=p.v.\int_{\R^n}\frac{x_j-y_j}{|x-y|^{n+1}}f(y)\,dy, \quad j=1,\ldots,n.$$
By using the well-known fact that 
$$c_nR_j=\partial_{x_j}\circ I_1=I_1\circ\partial_{x_j},$$
which can be seen via the Fourier transform, we have the pointwise inequality
\begin{equation}\label{riesz} |R_j f(x)|=C|I_1\big(\partial_{x_j}f \big)(x)|\leq CI_1(|\nabla f|)(x).\end{equation}

The pointwise bounds \eqref{Beur} and \eqref{riesz} can be extended to the maximally truncated operators
$$S^\star f(z)=\sup_{t>0}\left|\frac1\pi\int_{|\zeta|>t}\frac{f(z-\zeta)}{\zeta^2}\,dA(\zeta)\right|, \quad z\in \C$$
and
$$R_j^\star f(x)=\sup_{t>0}\left|\int_{|y|>t}\frac{y_j}{|y|^{n+1}}f(x-y)\,dy\right|, \quad j=1,\ldots,n, \ x\in \R^n.$$
Indeed, Mateu and Verdera \cite{MV} showed that the maximal Ahlfors-Beurling operator satisfies
\begin{equation}\label{maxbeur} S^\star f(z)\leq C M(Sf)(z)\end{equation}
and the maximal Riesz transforms satisfy
\begin{equation} \label{maxriesz} R_j^\star f(x)\leq CM^2(R_jf)(x).\end{equation}
The difference between the kernels of the Ahlfors-Beurling operator and the Riesz transforms is what yields the different iterates of the maximal function. Namely the kernel of the Ahlfors-Beurling operators is even whereas the kernel of Riesz transforms is odd.  These inequalities are derived for general operators in the seminal papers \cite{BMO, MOPV, MOV}.  The pointwise bounds \eqref{maxbeur} and \eqref{maxriesz} coupled with \eqref{Beur} and \eqref{riesz} and the fact that $I_1(|\nabla f|)$ is an $A_1$ weight now imply
$$S^\star f(z)\leq CM(Sf)(z)\leq CM(I_1(|\nabla f|))(z)\leq CI_1(|\nabla f|)(z)$$
and 
$$R_j^\star f(x)\leq CM^2(R_jf)(x)\leq CM^2(I_1(|\nabla f))(x)\leq CI_1(|\nabla f|)(x).$$

The pointwise bound by $I_1(|\nabla f|)$ also holds for more exotic maximal operators. Consider the spherical maximal operator, introduced by Stein \cite{Ste}
$$\mathcal M_{\Sn^{n-1}}f(x)=\sup_{t>0}\left|\int_{\Sn^{n-1}}f(x-ty')\,d\sigma(y')\right|$$
where $\sigma$ is the usual measure on the unit sphere.  If $f\in C_c^\infty(\R^n)$ then  
\begin{multline*} \int_{\Sn^{n-1}}f(x-ty')\,d\sigma(y')=-\int_t^\infty \frac{d}{dr}\int_{\Sn^{n-1}}f(x-ry')\,d\sigma(y')dr \\
=\int_t^\infty\int_{\Sn^{n-1}}\nabla f(x-ry')\cdot y' d\sigma(y')dr=\int_{|y|>t} \frac{\nabla f(x-y)\cdot y}{|y|^n}\,dy.
\end{multline*}
(see \cite{HL} for similar calculations), and we have
$$\mathcal M_{\Sn^{n-1}}f(x)\leq I_1(|\nabla f|)(x).$$
Haj\st{l}asz and Liu \cite{HL2} extended this to more general maximal operators.  They show that if $\mu$ is a compactly supported measure in a ball about the origin that satisfies
\begin{equation}\label{growth} \mu(B(x,r))\leq Cr^{n-1},\end{equation}
for all $r>0$ and $x\in \R^n$, then the maximal function
\begin{equation*}\mathcal M_\mu f(x)=\sup_{t> 0}\int_{\R^n} |f(x+ty)|\,d\mu(y)\end{equation*}
satisfies the pointwise bound
\begin{equation*}\label{genmax}\mathcal M_\mu f(x)\leq C[I_1(|\nabla f|)(x)+Mf(x)]\eqsim I_1(|\nabla f|)(x)\end{equation*}
where the equivalence follows from \eqref{pointmax}. The growth condition \eqref{growth} includes the restriction of Lebesgue measure to the unit ball, in which case $\mathcal M_\mu=M$, and the surface measure on the unit sphere, in which case $\mathcal M_\mu=\mathcal M_{\Sn^{n-1}}$.   

Consider the maximal operator $M_\Omega$ for $\Omega \in L^r(\mathbb S^{n-1})$ with $r>1$. By H\"older's inequality we have
\begin{equation}\label{MOmegabdd}M_\Omega f(x)\leq C\|\Omega\|_{L^r(\mathbb S^{n-1})}M_{L^{r'}}f(x)\end{equation}
where, 
$$M_{L^{r'}}f(x)=\sup_{Q\ni x} \left(\dashint_Q |f|^{r'}\right)^{\frac1{r'}}.$$

We are going to see that the exponent $r=n$ plays a special role.  First we claim that the following pointwise bound holds,
\begin{equation}\label{Mnbdd} M_{L^{n'}}f(x)\leq CI_1(|\nabla f|)(x)\end{equation}
where $C$ is a dimensional constant. Indeed, combining 
$$\left(\dashint_Q |f|^{n'}\right)^{\frac1{n'}}\leq \left(\dashint_Q |f-f_Q|^{n'}\right)^{\frac1{n'}}+\dashint_Q|f|.$$
with the classical Poincar\'e-Sobolev inequality,
$$\left(\dashint_Q |f(x)-f_Q|^{n'}\,dx\right)^{\frac1n'}\leq C\,\ell(Q)\left(\dashint_Q|\nabla f(x)|\,dx\right)$$
which holds with a dimensional constant $C$ and cube $Q$ with sidelength $\ell(Q)$, now implies
$$M_{L^{n'}}f(x)\leq CM_1(|\nabla f|)(x)+Mf(x)\leq CI_1(|\nabla f|)(x)$$
by \eqref{pointmax}.  This yields the claimed inequality \eqref{Mnbdd} which is interesting for several reasons. First, it improves the bound for iterated maximal functions \eqref{maxiteratedptwise} because
$$M^kf(x)\leq C_{k} M_{L^{n'}}f(x)$$
which follows from the fact that $M_{L^{n'}}f={M(|f|^{n'})^{\frac1{n'}}}$ is an $A_1$ weight with a constant that only depends on dimension (see \cite{Duobook}).  Second, $n'$ is a critical index in the sense that the pointwise inequality
$$M_{L^{n'+\ep}}f(x)\leq CI_1(|\nabla f|)(x)$$
does \emph{not} hold if $\ep>0$. Indeed, if such an inequality did hold for $\ep>0$, then the boundedness of $I_1:L^p(\R^n)\ra L^{p^*}(\R^n)$ for $1<p<n$ would imply the existence of $g\not=0$ in $C_c^\infty(\R^n)$ such that $Mg\in L^1(\R^n)$, which is impossible. 

The previous inequalities raise the following question: what is the largest maximal function $M_{\mathcal X}$ such that 
$$M_{\mathcal X}f(x)\leq CI_1(|\nabla f|)(x)?$$
This question is open, but we can improve the maximal function $M_{L^{n'}}$ in \eqref{Mnbdd} using Lorentz spaces. The Lorentz space $L^{p,q}(\mu)$, for $0<p<\infty$ and a general measure space $(X,\mu)$, is defined to be the collection of functions that satisfy
$$\|f\|_{L^{p,q}(\mu)}=\left(p\int_0^\infty t^q\mu(\{x\in X:|f(x)|>t\})^{\frac{q}{p}}\frac{dt}{t}\right)^{\frac1q}<\infty,$$
when $0<q<\infty$.  When $q=\infty$, $L^{p,\infty}(\mu)$ is the usual weak $L^p$ space defined by
$$\|f\|_{L^{p,\infty}(\mu)}=\sup_{t>0}t\mu(\{x\in X:|f(x)|>t\})^{\frac1p}<\infty.$$
We will use the normalized Lorentz average associated to a cube $Q$ (or a ball), 
\begin{equation}\label{lorentzavg}\|f\|_{L^{p,q}(Q)}:=\|f\|_{L^{p,q}(Q,\frac{dx}{|Q|})}=\frac{1}{|Q|^{\frac1p}}\|f\mathbf 1_Q\|_{L^{p,q}(\R^n)}.\end{equation}
If we define the Lorentz maximal function
$$M_{L^{p,q}}f(x)=\sup_{Q\ni x}\|f\|_{L^{p,q}(Q)},$$
then we have the following theorem.
\begin{theorem} \label{lorentzmax} If $f\in C_c^\infty(\R^n)$, then
\begin{equation*}M_{L^{n',1}}f(x)\leq CM_1(|\nabla f|)(x)+Mf(x)\end{equation*} 
and hence
\begin{equation}\label{lorentz} M_{L^{n',1}}f(x)\leq CI_1(|\nabla f|)(x).\end{equation}
\end{theorem}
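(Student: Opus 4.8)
The plan is to deduce both displays from a single sharp \emph{local Poincar\'e--Sobolev inequality in the Lorentz scale}: for every cube $Q$,
\[
\|f-f_Q\|_{L^{n',1}(Q)}\le C\,\ell(Q)\dashint_Q|\nabla f|,
\]
with $C$ dimensional. Granting this, fix $x$ and a cube $Q\ni x$. Adding back the constant $f_Q$ increases the normalized Lorentz average in \eqref{lorentzavg} by at most a dimensional multiple of $|f_Q|$ (immediate from the distribution-function formula, since the distribution function of $h+c$ on $Q$ is controlled by that of $h$ together with the contribution of the constant), so
\[
\|f\|_{L^{n',1}(Q)}\le\|f-f_Q\|_{L^{n',1}(Q)}+C|f_Q|\le C\,\ell(Q)\dashint_Q|\nabla f|+C\,Mf(x)\le C\,M_1(|\nabla f|)(x)+C\,Mf(x),
\]
where the last two steps use $Q\ni x$. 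Taking the supremum over cubes $Q\ni x$ gives the first displayed inequality of the theorem; the second then follows from \eqref{pointmax} (which gives $Mf\le CI_1(|\nabla f|)$) together with the elementary pointwise bound $M_1g\le c_nI_1g$ for $g\ge0$.

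To prove the local inequality I would use the coarea formula together with the relative isoperimetric inequality on a cube. Put $g=f-f_Q$ and let $m$ be a median of $g$ over $Q$. Since $g|_Q\in BV(Q)$, the coarea formula gives $\int_Q|\nabla g|=\int_{-\infty}^\infty P(\{g>s\};Q)\,ds$, where $P(\,\cdot\,;Q)$ denotes relative perimeter, and the relative isoperimetric inequality $P(E;Q)\ge c_n\min(|E|,|Q\setminus E|)^{1/n'}$ for measurable $E\subseteq Q$ yields
\[
\int_Q|\nabla f|\ge c_n\int_{-\infty}^\infty\min\big(|\{g>s\}\cap Q|,|\{g\le s\}\cap Q|\big)^{1/n'}\,ds.
\]
By the median property the minimum is $|\{g>s\}\cap Q|$ for $s\ge m$ and $|\{g\le s\}\cap Q|$ for $s<m$; substituting $u=|s-m|$ in the two pieces, combining them via $a^{1/n'}+b^{1/n'}\ge\tfrac12(a+b)^{1/n'}$ and the identity $|\{|g-m|>u\}\cap Q|=|\{g-m>u\}\cap Q|+|\{m-g>u\}\cap Q|$, one arrives at $\int_Q|\nabla f|\ge c_n\int_0^\infty|\{|g-m|>u\}\cap Q|^{1/n'}\,du\eqsim\|(g-m)\mathbf 1_Q\|_{L^{n',1}(\R^n)}$. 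Dividing by $|Q|^{1/n'}$ and using $|Q|^{1-1/n'}=|Q|^{1/n}=\ell(Q)$ gives $\|g-m\|_{L^{n',1}(Q)}\le C\,\ell(Q)\dashint_Q|\nabla f|$. Finally, since $g_Q=0$, the median satisfies $|m|\le\dashint_Q|g|\le C\,\ell(Q)\dashint_Q|\nabla f|$ by the ordinary $(1,1)$ Poincar\'e inequality recalled above, and adding $m$ back (again controlling the Lorentz average of a constant) completes the local estimate. Equivalently, this local estimate is the cube-localized form of the sharp endpoint embedding $\dot W^{1,1}(\R^n)\hookrightarrow L^{n',1}(\R^n)$, and one could alternatively just invoke that known result.

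The main obstacle is precisely this passage to the Lorentz endpoint. The naive route---$|f-f_Q|\le CI_1(|\nabla f|\mathbf 1_Q)$ pointwise---places $f-f_Q$ only in $L^{n',\infty}(Q)$ with the correct scaling, and since $L^{n',1}\subsetneq L^{n',\infty}$ this cannot be upgraded using mapping properties of Riesz potentials; the improvement genuinely requires the geometric input (coarea formula plus the relative isoperimetric inequality). The remaining points---checking $g|_Q\in BV(Q)$ so that the coarea formula applies, the harmless interchange of strict and non-strict inequalities at the countably many levels where $g$ has atoms, and tracking the exact dimensional constants in the normalized Lorentz averages---are routine.
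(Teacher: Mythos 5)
Your argument follows the same route as the paper: invoke the local Lorentz Poincar\'e--Sobolev inequality $\|f-f_Q\|_{L^{n',1}(Q)}\le C\ell(Q)\dashint_Q|\nabla f|$, use that $\|\cdot\|_{L^{n',1}(Q)}$ is a norm to add back the constant $f_Q$ (controlled by $Mf(x)$), and take the supremum over cubes. The only difference is that the paper simply cites this Poincar\'e--Sobolev inequality to O'Neil, Peetre, and Mal\'y--Pick, whereas you supply a correct self-contained proof via the coarea formula and the relative isoperimetric inequality.
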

Inequality \eqref{lorentz} implies \eqref{Mnbdd} because
$$M_{L^{n'}}f(x)\leq CM_{L^{n',1}}f(x).$$
by the well-known containment for Lorentz spaces $L^{n',1}\subseteq L^{n',n'}=L^{n'}$ (see Section \ref{proof}). Inequality \eqref{lorentz} follows from the pointwise bounds of $M_1(|\nabla f|)$ and $Mf$ by $I_1(|\nabla f|)$ (see inequality \eqref{pointmax}).

When $\Omega\in L^{n,\infty}(\Sn^{n-1})$, the maximal functions $M_\Omega$ and $M_{L^{n',1}}$ are related via the inequality
$$M_\Omega f(x)\leq  C\|\Omega \|_{L^{n,\infty}(\Sn^{n-1})}M_{L^{n',1}}f(x)$$
which improves \eqref{MOmegabdd} as well. Hence we have the following corollary.
\begin{corollary} \label{MOmega} If $\Omega \in L^{n,\infty}(\Sn^{n-1})$ and $f\in C_c^\infty(\R^n)$ then
\begin{equation*}M_\Omega f(x)\leq C\|\Omega\|_{L^{n,\infty}(\mathbb S^{n-1})}I_1(|\nabla f|)(x).\end{equation*}
\end{corollary}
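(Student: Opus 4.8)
The plan is to reduce Corollary \ref{MOmega} to Theorem \ref{lorentzmax} by establishing the single pointwise estimate
\begin{equation}\label{reduce-plan}
M_\Omega f(x)\leq C\|\Omega\|_{L^{n,\infty}(\Sn^{n-1})}\,M_{L^{n',1}}f(x),
\end{equation}
after which \eqref{lorentz} immediately gives the conclusion. So the entire task is to prove \eqref{reduce-plan}, i.e. to control the average
$\frac{1}{t^n}\int_{|y|<t}|\Omega(y)||f(x-y)|\,dy$
by $\|\Omega\|_{L^{n,\infty}}$ times the normalized Lorentz average $\|f\|_{L^{n',1}(B(x,t))}$. This is a genuine duality statement: $L^{n,\infty}$ and $L^{n',1}$ are (nearly) dual Lorentz spaces, and $n^{-1}+(n')^{-1}=1$, so a Hölder-type inequality in Lorentz spaces should do the job. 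The main obstacle is bookkeeping: $\Omega$ is a function on the sphere while the average is over a ball, so I must pass to polar coordinates and keep careful track of the radial weight and the normalization constants $|B(x,t)|\sim t^n$.

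First I would write, with $y=r\theta$, $r\in(0,t)$, $\theta\in\Sn^{n-1}$, and $dy=r^{n-1}\,dr\,d\sigma(\theta)$, so that since $\Omega$ is homogeneous of degree zero (or, in this maximal-operator setting, simply a function on $\Sn^{n-1}$ extended radially),
\[
\frac{1}{t^n}\int_{|y|<t}|\Omega(y)f(x-y)|\,dy
=\frac{1}{t^n}\int_0^t\!\!\int_{\Sn^{n-1}}|\Omega(\theta)|\,|f(x-r\theta)|\,r^{n-1}\,d\sigma(\theta)\,dr.
\]
Next I apply Hölder's inequality \emph{in the $\theta$ variable} in the Lorentz-space pairing $L^{n,\infty}(\Sn^{n-1})\times L^{n',1}(\Sn^{n-1})$:
\[
\int_{\Sn^{n-1}}|\Omega(\theta)|\,|f(x-r\theta)|\,d\sigma(\theta)
\leq C\,\|\Omega\|_{L^{n,\infty}(\Sn^{n-1})}\,\bigl\|\,|f(x-r\,\cdot)|\,\bigr\|_{L^{n',1}(\Sn^{n-1},d\sigma)}.
\]
Then I would integrate in $r$ against $r^{n-1}\,dr/t^n$ and recognize the result — using the layer-cake / polar-coordinate representation of the Lorentz norm on the ball $B(x,t)$ — as a constant multiple of $\|f\|_{L^{n',1}(B(x,t))}$. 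The key elementary computation here is that, by Minkowski's integral inequality applied in the $L^{n',1}$-quasinorm (more care is needed than for a norm since $L^{p,1}$ with $p<1$ would fail, but $n'\le 2$ still gives a genuine norm up to equivalence for $n'\ge 1$, which always holds), the radial integration of the spherical $L^{n',1}(d\sigma)$ norms reproduces, up to dimensional constants, the solid $L^{n',1}(B(x,t))$ norm; concretely one checks $\|g\|_{L^{n',1}(B(0,t))}\eqsim t^{-1}\int_0^t \|g(r\cdot)\|_{L^{n',1}(\Sn^{n-1},d\sigma)}\,\frac{r^{n-1}}{t^{n-1}}\,dr$ by passing both sides through the distribution-function formula.

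Taking the supremum over $t>0$ then yields \eqref{reduce-plan}. Finally, combining \eqref{reduce-plan} with \eqref{lorentz} from Theorem \ref{lorentzmax},
\[
M_\Omega f(x)\leq C\|\Omega\|_{L^{n,\infty}(\Sn^{n-1})}\,M_{L^{n',1}}f(x)\leq C\|\Omega\|_{L^{n,\infty}(\Sn^{n-1})}\,I_1(|\nabla f|)(x),
\]
which is the assertion of the corollary. I expect the only real subtlety to be the sharp form of Hölder's inequality in Lorentz spaces on $\Sn^{n-1}$ — specifically the constant $C=C(n)$ in $\int_{\Sn^{n-1}}|gh|\,d\sigma\le C\|g\|_{L^{n,\infty}}\|h\|_{L^{n',1}}$ — but this is classical (O'Neil's inequality), and everything else is a change of variables plus the already-proven Theorem \ref{lorentzmax}.
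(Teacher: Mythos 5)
Your overall strategy coincides with the paper's: the corollary is obtained by first proving the pointwise reduction $M_\Omega f(x)\lesssim\|\Omega\|_{L^{n,\infty}(\Sn^{n-1})}M_{L^{n',1}}f(x)$ and then invoking Theorem \ref{lorentzmax}. The paper states this reduction without proof; the most economical way to get it, and the one implicit in the paper (it is exactly the step used at line \eqref{leftoff} in the proof of Theorem \ref{main}), is to apply the Lorentz--H\"older inequality directly on the solid ball $B(0,t)$ with normalized measure, and then observe that degree-zero homogeneity gives $\|\Omega\|_{L^{n,\infty}(B(0,t))}=c_n\|\Omega\|_{L^{n,\infty}(\Sn^{n-1})}$ by the polar-coordinate identity for the distribution function already computed in the paper. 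That route skips the sphere altogether.

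Your polar-coordinate detour does lead to the right answer, but there is a misattribution in the reassembly step that should be fixed. You reduce to showing
\[
\frac{1}{t^n}\int_0^t r^{n-1}\,\|f(x-r\,\cdot)\|_{L^{n',1}(\Sn^{n-1})}\,dr\ \lesssim\ \|f\|_{L^{n',1}(B(x,t))},
\]
and attribute this to Minkowski's integral inequality in $L^{n',1}$. Minkowski goes the wrong way here: it bounds the norm of an integral by the integral of norms, which is the opposite of what is needed. What actually proves the displayed inequality is H\"older's inequality in the radial variable applied to the distribution function. Set $h_\lambda(r)=\sigma\bigl(\{\theta:|f(x-r\theta)|>\lambda\}\bigr)$, so that by polar coordinates $|\{y\in B(x,t):|f(y)|>\lambda\}|=\int_0^t r^{n-1}h_\lambda(r)\,dr$. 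Using $\|g\|_{L^{n',1}}\sim\int_0^\infty\mu(\{g>\lambda\})^{1/n'}\,d\lambda$ on each side, the needed bound reduces, for each fixed $\lambda$, to
\[
\int_0^t r^{n-1}h_\lambda(r)^{1/n'}\,dr\ \leq\ \Bigl(\int_0^t r^{n-1}h_\lambda(r)\,dr\Bigr)^{1/n'}\Bigl(\int_0^t r^{n-1}\,dr\Bigr)^{1/n}\ \lesssim\ t\Bigl(\int_0^t r^{n-1}h_\lambda(r)\,dr\Bigr)^{1/n'},
\]
which is H\"older with exponents $n'$ and $n$ against $r^{n-1}\,dr$. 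Integrating in $\lambda$ gives the claim. So: correct conclusion, correct high-level architecture, but the reassembly lemma needs H\"older rather than Minkowski, and the whole spherical detour can be bypassed by doing Lorentz--H\"older once on the ball as in the proof of Theorem \ref{main}.
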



We have shown that several operators, including specific Calder\'on-Zygmund operators and maximal functions, satisfy a pointwise bound by $I_1(|\nabla f|)$. Our main result concerns a pointwise inequality for maximal rough singular integral operators of the form
$$T^\star_{\Omega} f(x)=\sup_{t>0}\left|\int_{|y|>t}\frac{\Omega(y)}{|y|^{n}}f(x-y)\,dy\right|,$$
when $\Omega$ satisfies appropriate conditions as stated in Theorem \ref{main} below. Rough maximal singular integrals, that is, $T_\Omega^\star$, where $\Omega$ does not have smoothness, are very difficult objects and several open problems remain. For example, the weak (1,1) endpoint for $T^\star_\Omega$ when $\Omega\in L^r(\mathbb S^{n-1})$ for $r>1$ is completely open. The best current results, which are related to the optimality of weighted estimates on $L^2(w)$, are due to Honz\'ik \cite{Hon2}. In \cite{Hon1} Honz\'ik also constructs an example of $\Omega\in L^1(\mathbb S^1)$ such that $T_\Omega$ is bounded on $L^2(\R^2)$, but $T^\star_\Omega$ is not bounded on $L^2(\R^2)$.

Recall that we will always be working in dimension $n\geq 2$. Our main result is the following.



\begin{theorem} \label{main} Suppose $\Omega$ is homogeneous of degree zero and belongs to $L^{n,\infty}(\mathbb S^{n-1})$ with zero average on $\mathbb S^{n-1}$. Then
\begin{equation*}\label{ptwisebdd} T^\star_{\Omega} f(x)\leq C\|\Omega\|_{L^{n,\infty}(\mathbb S^{n-1})} I_1(|\nabla f|)(x), \quad f\in C^\infty_c(\R^n).\end{equation*}
\end{theorem}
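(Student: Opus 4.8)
The plan is to estimate $T^\star_\Omega f(x)$ by decomposing the kernel $\Omega(y)/|y|^n$ dyadically in $|y|$ and exploiting the cancellation $\int_{\mathbb S^{n-1}}\Omega\,d\sigma=0$ to convert each annular piece into something controlled by the gradient. Fix $x$ and $t>0$; writing the truncated integral as a sum over dyadic annuli $A_j=\{y: 2^j t\le |y|<2^{j+1}t\}$ for $j\ge 0$, I would, on each annulus, subtract off an average of $f$ over the appropriate sphere or ball. Using the mean-zero condition, for each $j$,
\begin{equation*}
\int_{A_j}\frac{\Omega(y)}{|y|^n}f(x-y)\,dy=\int_{A_j}\frac{\Omega(y)}{|y|^n}\bigl(f(x-y)-c_j\bigr)\,dy
\end{equation*}
for a suitable constant $c_j$ (e.g.\ the average of $f(x-\cdot)$ over $A_j$, or over a sphere of radius $\sim 2^j t$). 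The contribution of the averaged-out term vanishes by cancellation once $\Omega$ is radially integrated on a fixed annulus — here one must be a little careful that $c_j$ is genuinely radially constant, so using the average over a single sphere and then Fubini over the radial variable, or a telescoping of spherical averages, is the cleaner route.

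Next I would bound $|f(x-y)-c_j|$ on $A_j$ by an oscillation that Poincaré controls: $|f(x-y)-c_j|$ integrated against the measure $|\Omega(y)|\,|y|^{-n}\,dy$ on $A_j$ should be estimated by first applying Hölder's inequality in the Lorentz duality $L^{n,\infty}\times L^{n',1}$ on the sphere (this is exactly where the hypothesis $\Omega\in L^{n,\infty}(\mathbb S^{n-1})$ enters, producing the factor $\|\Omega\|_{L^{n,\infty}(\mathbb S^{n-1})}$), and then recognizing the resulting $L^{n',1}$-average of $f-c_j$ over a ball of radius $\sim 2^j t$ as being dominated by $M_{L^{n',1}}$ applied to $f$ — or, more precisely, by the sharper Lorentz Poincaré-type quantity. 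Invoking Theorem \ref{lorentzmax}, each annular term is then bounded by
\begin{equation*}
C\|\Omega\|_{L^{n,\infty}(\mathbb S^{n-1})}\Bigl(2^j t\cdot \dashint_{B(x,C2^j t)}|\nabla f|\Bigr),
\end{equation*}
and in fact one wants the centered/uncentered version so that the $M_1(|\nabla f|)$ bound from Theorem \ref{lorentzmax} applies uniformly in $j$ and in $t$. Summing in $j\ge 0$: the naive sum over all $j$ diverges, so I would additionally use that $f\in C_c^\infty$ so that for $2^j t$ larger than (twice) the diameter of $\mathrm{supp}\,f$ relative to $x$ the term is controlled by the trivial $\ell^1$ decay coming from $|x-y|\ge 2^j t$, i.e.\ the tail is summable and bounded by $Mf(x)$ or directly by $I_1(|\nabla f|)(x)$. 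Taking the supremum over $t>0$ then yields $T^\star_\Omega f(x)\le C\|\Omega\|_{L^{n,\infty}(\mathbb S^{n-1})}\bigl(M_1(|\nabla f|)(x)+Mf(x)\bigr)$, and the $A_1$-weight property of $I_1(|\nabla f|)$ (equivalently inequality \eqref{pointmax}) converts this into the claimed bound by $C\|\Omega\|_{L^{n,\infty}(\mathbb S^{n-1})}I_1(|\nabla f|)(x)$.

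The main obstacle I anticipate is the summability of the dyadic pieces in a way that is uniform in the truncation parameter $t$: the Poincaré bound $2^j t\,\dashint_{B(x,C2^j t)}|\nabla f|$ does not decay in $j$, so cancellation alone is not enough and one genuinely needs to play the two regimes against each other — small annuli handled by oscillation/Poincaré plus the Lorentz Hölder estimate, large annuli (beyond the support scale) handled by a crude size estimate that produces geometric decay. Making the choice of $c_j$ compatible with the exact cancellation of $\Omega$ on each annulus, while simultaneously being the right "center" for the Poincaré inequality and for $M_{L^{n',1}}$, is the delicate bookkeeping step; everything else is routine once Theorem \ref{lorentzmax} is in hand.
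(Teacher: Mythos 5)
Your approach follows the same road map as the paper's proof: dyadically decompose the truncated kernel, use the mean-zero condition to subtract a constant $c_k$ on each annulus, apply the Lorentz duality $L^{n,\infty}\times L^{n',1}$ to pull out $\|\Omega\|_{L^{n,\infty}(\Sn^{n-1})}$, and then use the $(n',1)$--Poincar\'e inequality with $c_k = f_{B_k}$ to arrive at the dyadic sum
\begin{equation*}
\sum_{k\in\Z} r(B_k)\,\dashint_{B_k}|\nabla f|, \qquad B_k = B(x,2^k).
\end{equation*}
Up to this point you are on the right track, and you are right that the Lorentz Poincar\'e inequality plus the computation $\|\Omega\|_{L^{n,\infty}(B(0,2^k))}=c_n\|\Omega\|_{L^{n,\infty}(\Sn^{n-1})}$ are the key quantitative ingredients.

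However, there is a genuine gap in how you finish, and you correctly sense it but propose the wrong remedy. You write that the final bound is $C\|\Omega\|_{L^{n,\infty}(\Sn^{n-1})}\bigl(M_1(|\nabla f|)(x)+Mf(x)\bigr)$, obtained by bounding small-$j$ terms via Poincar\'e and large-$j$ terms by crude decay. This does not work: each individual term $r(B_k)\dashint_{B_k}|\nabla f|$ is indeed $\leq M_1(|\nabla f|)(x)$, but there are infinitely many terms and they do \emph{not} decay geometrically at moderate scales, so the sum is not controlled by the maximal function $M_1(|\nabla f|)$. The ``tail decay for $|y|$ beyond the support of $f$'' only handles one end, and is not the actual obstruction. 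What is true --- and what the paper proves --- is that the entire sum is comparable to the \emph{integral} operator $I_1(|\nabla f|)(x)$, which is strictly larger than $M_1(|\nabla f|)(x)$. The missing idea is an absorption (telescoping) argument: write each ball average as
\begin{equation*}
r(B_k)\dashint_{B_k}|\nabla f| = \frac1{\omega_n 2^{k(n-1)}}\int_{2^{k-1}<|x-y|\leq 2^k}|\nabla f| + \frac1{\omega_n 2^{k(n-1)}}\int_{|x-y|\leq 2^{k-1}}|\nabla f|;
\end{equation*}
summing over $k$, the first pieces assemble into $\frac1{\omega_n}I_1(|\nabla f|)(x)$, while the second pieces reproduce $2^{1-n}$ times the original sum, which can be absorbed to the left since $n\geq 2$ makes $2^{1-n}<1$. (One first checks the sum converges for $f\in C_c^\infty$, using the smoothness for small $k$ and the compact support for large $k$, so the rearrangement is legitimate.) Without this step --- or an equivalent Poincar\'e inequality on annuli --- your argument does not produce the $I_1(|\nabla f|)$ bound, only a bound by an infinite, non-decaying sum of scale averages. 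Replacing your final ``$M_1(|\nabla f|)+Mf$'' conclusion with this identification of the dyadic sum with $I_1(|\nabla f|)$ closes the gap and recovers the theorem.
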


\begin{remark} Since $\sigma(\mathbb S^{n-1})<\infty$, when $r\geq n$, we have that 
$$L^r(\mathbb S^{n-1})\subseteq L^n(\mathbb S^{n-1})\subseteq L^{n,\infty}(\mathbb S^{n-1}).$$
So the conclusion of Theorem \ref{main} also holds under the stronger assumption that $\Omega\in L^r(\Sn^{n-1})$ for $r\geq n$. Moreover, assuming $\Omega\in L^r(\mathbb S^{n-1})$ for $r\geq n$ is natural in our setting because of the known weighted estimates for $T_\Omega$ that were used to prove \eqref{onewTOmegaSob}.
\end{remark}



We define the homogeneous weighted Sobolev space $\dot{W}^{1,p}(v)$ to be the closure of $C_c^\infty(\R^n)$ in the semi-norm $\|\nabla f\|_{L^p(v)}$.  As a corollary we have the following.  

\begin{corollary} \label{maintwocor} Suppose $1\leq p,q<\infty$ and $T$ is any one of the following operators: the Hardy-Littlewood maximal operator $M$ or $k$-fold iteration $M^k$; the Lorentz maximal operator $M_{L^{n',1}}$; the spherical maximal operator $\mathcal M_{\Sn^{n-1}}$; a rough maximal operator $M_\Omega$ where $\Omega\in L^{n,\infty}(\Sn^{n-1})$, or a rough singular integral operator $T_\Omega$ or maximal rough singular integral operator $T^\star_\Omega$ where $\Omega\in L^{n,\infty}(\Sn^{n-1})$ and has mean zero.  If $I_1:L^p(v)\ra L^q(u)$, then $$T:\dot W^{1,p}(v) \ra L^q(u)$$ with
\begin{equation}\label{twoweightSISob} \|Tf\|_{L^q(u)}\leq C\|I_1\|_{\mathcal B(L^p(v),L^q(u))}\|\nabla f\|_{L^p(v)}.\end{equation}
Moreover, if $I_1:L^p(v)\ra L^{q,\infty}(u)$, then
$$T:\dot W^{1,p}(v) \ra L^{q,\infty}(u)$$ with
\begin{equation}\label{twoweightweakSISob} \|Tf\|_{L^{q,\infty}(u)}\leq C\|I_1\|_{\mathcal B(L^p(v),L^{q,\infty}(u))}\|\nabla f\|_{L^p(v)}.\end{equation}
\end{corollary}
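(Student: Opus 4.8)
The plan is to reduce everything to the pointwise estimates already in hand together with the mapping property of $I_1$, and then to extend from $C_c^\infty(\R^n)$ to the completion $\dot W^{1,p}(v)$ by a density argument. First I would record that each operator $T$ in the list satisfies, for $f\in C_c^\infty(\R^n)$, a pointwise bound
\begin{equation*}|Tf(x)|\le C\,I_1(|\nabla f|)(x),\end{equation*}
with $C$ depending only on $n$ and, in the rough cases, on $\|\Omega\|_{L^{n,\infty}(\Sn^{n-1})}$: for $M$ and its iterates $M^k$ this is \eqref{maxiteratedptwise}; for $M_{L^{n',1}}$ it is \eqref{lorentz}; for $\mathcal M_{\Sn^{n-1}}$ it is the computation carried out in the introduction; for $M_\Omega$ it is Corollary \ref{MOmega}; and for $T_\Omega$ and $T^\star_\Omega$ it follows from Theorem \ref{main} together with the trivial bound $|T_\Omega f|\le T^\star_\Omega f$. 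Given the boundedness $I_1\colon L^p(v)\to L^q(u)$, applying it to this pointwise inequality immediately gives \eqref{twoweightSISob} for $f\in C_c^\infty(\R^n)$, and the analogue with $L^{q,\infty}(u)$ in place of $L^q(u)$ gives \eqref{twoweightweakSISob} on $C_c^\infty(\R^n)$.

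Next I would carry out the density step. Given $f\in\dot W^{1,p}(v)$, choose $f_k\in C_c^\infty(\R^n)$ with $\nabla f_k\to\nabla f$ in $L^p(v)$. Each operator on the list is either linear (the $T_\Omega$), or is built from absolute values of expressions linear in $f$ (the operators $\mathcal M_{\Sn^{n-1}}$, $T^\star_\Omega$), or from averages of $|f|$ (the maximal operators $M$, $M^k$, $M_{L^{n',1}}$, $M_\Omega$); in every case $T$ is sublinear and $|T(-g)|=|Tg|$, so that
\begin{equation*}|Tf_k(x)-Tf_j(x)|\le\bigl|T(f_k-f_j)(x)\bigr|\le C\,I_1\bigl(|\nabla f_k-\nabla f_j|\bigr)(x),\end{equation*}
by the pointwise bound applied to $f_k-f_j\in C_c^\infty(\R^n)$. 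Combining with the mapping property of $I_1$ shows that $\{Tf_k\}$ is Cauchy in $L^q(u)$ (respectively in the complete quasi-normed space $L^{q,\infty}(u)$), and I would \emph{define} $Tf$ as its limit; the same estimate shows the limit is independent of the approximating sequence, so $T$ extends to $\dot W^{1,p}(v)$. Passing to the limit in the inequality on test functions then yields \eqref{twoweightSISob} and \eqref{twoweightweakSISob}, and, along an a.e.-convergent subsequence, the pointwise bound $|Tf|\le C\,I_1(|\nabla f|)$ for $f\in\dot W^{1,p}(v)$ as well.

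The point to emphasize is that there is no substantial obstacle in the corollary itself: essentially all of the difficulty is packaged in the pointwise estimates it quotes, above all Theorem \ref{main}. The only things that need a little care are the sublinearity and the symmetry $|T(-g)|=|Tg|$ used to make $\{Tf_k\}$ Cauchy, and, in the weak-type statement, the fact that one works in a quasi-Banach space rather than a Banach space, so that it is completeness of $L^{q,\infty}(u)$ — not a genuine triangle inequality — that underlies the extension.
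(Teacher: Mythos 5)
Your proposal is correct and follows exactly the route the paper intends: combine the pointwise bound $|Tf|\le C\,I_1(|\nabla f|)$ for $f\in C_c^\infty(\R^n)$ (from \eqref{maxiteratedptwise}, \eqref{lorentz}, the spherical-maximal computation, Corollary \ref{MOmega}, and Theorem \ref{main}) with the assumed boundedness of $I_1$. The paper states the corollary without a separate proof, treating it as immediate; your addition of the density step — using sublinearity and the symmetry $|T(-g)|=|Tg|$ to see that $\{Tf_k\}$ is Cauchy in $L^q(u)$ or $L^{q,\infty}(u)$, and defining $Tf$ as the limit — simply spells out what the definition of $\dot W^{1,p}(v)$ as a closure leaves implicit, and is a reasonable piece of bookkeeping to make explicit.
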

\begin{remark}  By letting $f=I_1g$, inequality \eqref{twoweightSISob} becomes 
$$\|T(I_1g)\|_{L^q(u)}\leq C\|\mathbf{R}g\|_{L^p(v)}$$
where $\mathbf{R}=(R_1,\ldots,R_n)$ is the vector Riesz transform operator. This control by Riesz transforms is of interest, particularly at the endpoint, and we refer to Schikorra, Spector, and Van Schaftingen \cite{SSV} and the improvement into Lorentz spaces in \cite{Sp} for more on the history and relevance of these inequalities.  
\end{remark}

We remark that the Sobolev mappings \eqref{twoweightSISob} and \eqref{twoweightweakSISob} are new for several of the operators under consideration even in the Lebesgue measure setting. The known endpoint weak inequality for $I_1:L^1(\R^n)\ra L^{n',\infty}(\R^n)$ yields the boundedness 
\begin{equation*}T :\dot W^{1,1}(\R^n)\ra L^{n',\infty}(\R^n),\end{equation*}
for any operator satisfying $|Tf|\leq CI_1(|\nabla f|)$.  For the spherical maximal operator $\mathcal M_{\Sn^{n-1}}$, the inequality
\begin{equation}\label{weakbdMS}\|\mathcal M_{\Sn^{n-1}}f\|_{L^{n',\infty}(\R^n)}\leq C \|\nabla f\|_{L^1(\R^n)}\end{equation}
is new.  Inequality \eqref{weakbdMS} does not follow from the known bounds for the spherical maximal, since $n'=\frac{n}{n-1}$ is a critical index.  If $\mathcal M_{\Sn^{n-1}}$ satisfied the weak type boundedness
\begin{equation}\label{falseweak}\|\mathcal M_{\Sn^{n-1}}g\|_{L^{n',\infty}(\R^n)}\leq C\|g\|_{L^{n'}(\R^n)},\end{equation}
then the Sobolev inequality
$$\|\mathcal M_{\Sn^{n-1}}f\|_{L^{n',\infty}(\R^n)}\leq C \|\nabla f\|_{L^1(\R^n)}$$
would simply follow from the embedding $\dot W^{1,1}(\R^n)\hookrightarrow L^{n'}(\R^n)$. However, the inequality \eqref{falseweak} is false as the example in \cite{Ste} shows (note this example does not belong to $W^{1,1}(\R^n)$).  

There is a similar phenomenon with the operator $M_{L^{n',1}}$. Again, inequality \eqref{lorentz} in Theorem \ref{lorentzmax} and the weak type inequality for $I_1$ imply
$$\|M_{L^{n',1}}f\|_{L^{n',\infty}(\R^n)}\leq C\|\nabla f\|_{L^1(\R^n)}.$$
However, $M_{L^{n',1}}$ is not weak-type $n'$, but rather restricted weak-type, $M_{L^{n',1}}:L^{n',1}(\R^n)\ra L^{n',\infty}(\R^n)$ (see \cite{Per95a}).

Finally, we see that any operator satisfying
$$|Tf(x)|\leq CI_1(|\nabla f|)(x),$$
will have a certain self-improvement.  Indeed, we may use the well-known fact that $A_1$ weights are invariant under powers close to one, that is, 
$$w\in A_1 \Rightarrow w^r\in A_1$$
for some $r>1$.  Given a fixed $f\in C_c^\infty(\R^n)$, since $I_1(|\nabla f|)\in A_1$ we have that $I_1(|\nabla f|)^r\in A_1$ for some $r>1$ and hence
\begin{multline*}|Tf(x)|^r\leq I_1(|\nabla f|)(x)^r \\ \Rightarrow M_{L^r}(Tf)(x)=M(|Tf|^r)(x)^{\frac1r}\leq CI_1(|\nabla f|)(x).\end{multline*}
The value of $r>1$ depends on the $A_1$ constant of $I_1(|\nabla f|)$ which in turn depends on the dimension $n$.  However, we have the following lemma which shows the exponent can range up to the critical index $n'$.

\begin{lemma} \label{A1thm} Suppose $0<\al<n$ and $\mu$ is a finite Borel measure.  If we define 
$$I_\al \mu(x)=\int_{\R^n}\frac{d\mu(y)}{|x-y|^{n-\al}},$$
then $(I_\al\mu)^r\in A_1$ for all $0\leq r<(\frac{n}{\al})'=\frac{n}{n-\al}$.
\end{lemma}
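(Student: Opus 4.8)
The plan is to show directly that $M\big((I_\al\mu)^r\big)(x) \leq C (I_\al\mu)^r(x)$ for a dimensional constant $C$, by fixing a cube $Q$, splitting $\mu = \mu\mathbf 1_{3Q} + \mu\mathbf 1_{(3Q)^c} =: \mu_1 + \mu_2$, and estimating the average of $(I_\al\mu)^r$ over $Q$ by the value of $(I_\al\mu)^r$ at a representative point. For the ``far'' part $\mu_2$, the kernel $|x-y|^{\al-n}$ is essentially constant (up to dimensional factors) as $x$ ranges over $Q$ and $y$ over $(3Q)^c$, so $I_\al\mu_2(x) \eqsim I_\al\mu_2(x')$ for any $x,x'\in Q$, and hence $\dashint_Q (I_\al\mu_2)^r \leq C\, I_\al\mu_2(x_0)^r \leq C\, I_\al\mu(x_0)^r$ for any chosen $x_0\in Q$; this part works for every $r>0$ and is routine. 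The real content is the ``local'' part: I must bound $\dashint_Q (I_\al\mu_1)^r\,dx$ by $C\inf_Q (I_\al\mu)^r$, and to make the infimum meaningful I should choose $x_0\in Q$ to be a point where $I_\al\mu(x_0)$ is comparable to its essential infimum over $Q$, or more simply bound the average of $(I_\al\mu_1)^r$ by an absolute constant times $\big(\mu(3Q)/\ell(Q)^{n-\al}\big)^r$ and separately observe $I_\al\mu(x_0) \gtrsim \mu(3Q)/\ell(Q)^{n-\al}$ for any $x_0$ near the center, since for $y\in 3Q$ we have $|x_0-y|\lesssim \ell(Q)$.

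So the crux reduces to the estimate
\[
\frac{1}{|Q|}\int_Q \left(\int_{3Q}\frac{d\mu(y)}{|x-y|^{n-\al}}\right)^r dx \leq C\left(\frac{\mu(3Q)}{\ell(Q)^{n-\al}}\right)^r,
\]
valid precisely for $0\le r < \tfrac{n}{n-\al}$. The mechanism is that $I_\al(\mu\mathbf 1_{3Q})$ lies in weak-$L^{n/(n-\al)}$ with respect to the measure $dx$ on $Q$: indeed the Riesz-potential-type bound gives, after normalizing, $\big|\{x\in Q : I_\al\mu_1(x) > \lambda\}\big| \lesssim \big(\mu(3Q)/\lambda\big)^{n/(n-\al)}$, which one proves by the standard layer-cake/distributional argument — estimating $\mu_1(\{y : |x-y|^{\al-n} > \text{something}\})$ — this is the classical weak-type bound $I_\al : \M(\R^n) \to L^{n/(n-\al),\infty}$ for the Riesz potential of a finite measure. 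Then $\|I_\al\mu_1\|_{L^r(Q,dx)}^r = r\int_0^\infty \lambda^{r-1}|\{x\in Q: I_\al\mu_1 > \lambda\}|\,d\lambda$ converges at $\infty$ exactly because $r < n/(n-\al)$ (the weak-type exponent), while the contribution near $\lambda = 0$ is controlled by $|Q|\cdot(\text{tail value})^r$; carrying the bookkeeping through, splitting the integral at $\lambda_0 = \mu(3Q)/\ell(Q)^{n-\al}$, yields the displayed inequality with the stated sharp range of $r$.

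The main obstacle is getting the \emph{uniform} (dimensional) constant and the sharp endpoint $r < n/(n-\al)$ simultaneously: one must be careful that the weak-type constant for $I_\al$ acting on finite measures is scale-invariant (it is, by homogeneity of the kernel, so rescaling $3Q$ to the unit cube is legitimate), and that the $\lambda$-integral is split at the natural scale $\mu(3Q)/\ell(Q)^{n-\al}$ so that both pieces produce the same power $\big(\mu(3Q)/\ell(Q)^{n-\al}\big)^r$. Once the local and global pieces are combined, $\dashint_Q (I_\al\mu)^r \le \dashint_Q (I_\al\mu_1)^r + \dashint_Q (I_\al\mu_2)^r \lesssim I_\al\mu(x_0)^r$, and taking the supremum over cubes $Q\ni x$ and then the (essential) infimum/continuity of $I_\al\mu$ gives $M\big((I_\al\mu)^r\big) \le C (I_\al\mu)^r$, i.e. $(I_\al\mu)^r \in A_1$. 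I would remark at the end that the range is sharp: the borderline $r = n/(n-\al)$ fails already for $\mu = \delta_0$, since $(I_\al\delta_0)^{n/(n-\al)}(x) = |x|^{-n}$ is not locally integrable.
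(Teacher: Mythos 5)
Your proposal is correct and follows essentially the same route as the paper: split $\mu$ into a near part (paper uses $2Q$, you use $3Q$) and a far part, control the far part by kernel comparability on $Q\times(2Q)^c$, and control the near part via the weak-type bound $I_\al:\M(\R^n)\to L^{(n/\al)',\infty}$ — your layer-cake splitting at $\lambda_0=\mu(3Q)/\ell(Q)^{n-\al}$ is exactly a direct proof of Kolmogorov's inequality, which the paper invokes by name. One small slip worth fixing: you restrict $x_0$ to lie ``near the center'' of $Q$, but the lower bound $I_\al\mu(x_0)\gtrsim \mu(3Q)/\ell(Q)^{n-\al}$ holds for \emph{every} $x_0\in Q$ (since $|x_0-y|\lesssim\ell(Q)$ for all $y\in 3Q$ and all $x_0\in Q$), and this is what you actually need to pass from the per-cube estimate to $M\big((I_\al\mu)^r\big)\le C(I_\al\mu)^r$ pointwise.
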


\begin{remark} As far as we can tell, Lemma \ref{A1thm} is new. This result improves the previous fact that $I_\al \mu\in A_1$ because the power can always satisfy $r>1$ independent of $\mu$.  For the fractional maximal operator, the fact that $(M_\al\mu)^r\in A_1$ for $0\leq r<(\frac{n}{\al})'$, was shown by Sawyer \cite{Sawbook}. Also, in \cite{HMPV}, a similar result is needed with very precise bounds, in the case of $M_\al$.
\end{remark}

Thus we have the following self-improvement which follows from Lemma \ref{A1thm}.

\begin{theorem} \label{selfimprove} Suppose $T$ is an operator such that
$$|Tf(x)|\leq CI_1(|\nabla f|)(x), \quad f\in C_c^\infty(\R^n).$$
Then 
\begin{equation}\label{selfimproveineq} M_{L^r}(Tf)(x)\leq CI_1(|\nabla f|)(x), \quad f\in C_c^\infty(\R^n)\end{equation}
for all $0\leq r<n'.$
\end{theorem}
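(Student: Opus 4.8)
The plan is to reduce Theorem \ref{selfimprove} directly to Lemma \ref{A1thm}. Fix $f \in C_c^\infty(\R^n)$, $f \not\equiv 0$, and consider the nonnegative function $g = |\nabla f| \in C_c^\infty(\R^n) \cap L^1(\R^n)$; since $g$ is compactly supported and bounded, the measure $d\mu = g\,dx$ is a finite Borel measure, and $I_1 \mu = I_1(|\nabla f|)$ in the notation of the lemma. By Lemma \ref{A1thm} with $\al = 1$, we get $\big(I_1(|\nabla f|)\big)^r \in A_1$ for every $0 \le r < n' = \frac{n}{n-1}$, with $A_1$ constant depending only on $n$ and $r$ (and implicitly on $f$ through $\mu$, but uniformly bounded once $r < n'$ is fixed).

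Next I would use the hypothesis $|Tf(x)| \le C\, I_1(|\nabla f|)(x)$ to pass from the power of the Riesz potential to a power of $|Tf|$. Raising to the $r$-th power, $|Tf(x)|^r \le C^r\, \big(I_1(|\nabla f|)(x)\big)^r$ pointwise. Now apply the Hardy-Littlewood maximal operator $M$ to both sides; $M$ is monotone, so
\[
M\big(|Tf|^r\big)(x) \le C^r\, M\big( (I_1(|\nabla f|))^r \big)(x).
\]
Since $(I_1(|\nabla f|))^r \in A_1$, the defining inequality for the $A_1$ class gives $M\big( (I_1(|\nabla f|))^r \big)(x) \le [(I_1(|\nabla f|))^r]_{A_1}\, (I_1(|\nabla f|)(x))^r$. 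Taking $r$-th roots and recalling $M_{L^r}(Tf) = \big(M(|Tf|^r)\big)^{1/r}$ yields
\[
M_{L^r}(Tf)(x) \le C\, [(I_1(|\nabla f|))^r]_{A_1}^{1/r}\, I_1(|\nabla f|)(x) \le C'\, I_1(|\nabla f|)(x),
\]
which is exactly \eqref{selfimproveineq}. One should note that for $r \le 1$ the statement $M_{L^r}(Tf) \le C\,I_1(|\nabla f|)$ already follows from $M_{L^1}(Tf) = M(Tf) \le CM(I_1(|\nabla f|)) \le CI_1(|\nabla f|)$ via $M_{L^r} \le M_{L^1}$ for $r \le 1$ and the $A_1$ property of $I_1(|\nabla f|)$ itself, so the content is really the range $1 \le r < n'$.

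The only real obstacle is Lemma \ref{A1thm}, which I am permitted to assume; everything downstream of it is the routine monotonicity-plus-$A_1$-definition argument sketched above. A minor point to be careful about is that the resulting constant $C'$ depends on $r$ and blows up as $r \uparrow n'$ (because the $A_1$ constant of $(I_1\mu)^r$ does), so the theorem is genuinely a statement for each fixed $r < n'$ and not uniformly up to the endpoint; this matches the way the statement is phrased. I would also make explicit that $|\nabla f|$, though not itself in $C_c^\infty$ (only Lipschitz with compact support), still generates a finite Borel measure, so Lemma \ref{A1thm} applies without modification.
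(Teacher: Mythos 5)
Your proof is correct and follows essentially the same route as the paper: the paper also derives Theorem \ref{selfimprove} immediately from Lemma \ref{A1thm} via the chain of raising the pointwise bound $|Tf|\le C I_1(|\nabla f|)$ to the $r$-th power, applying $M$, invoking the $A_1$ property of $\big(I_1(|\nabla f|)\big)^r$, and taking $r$-th roots. Your remarks about the constant blowing up as $r\uparrow n'$ and about $|\nabla f|$ being merely Lipschitz (but still generating a finite Borel measure) are accurate but not points the paper dwells on.
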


\begin{remark} One may wish to compare the result of Corollary \ref{selfimprove} with inequality \eqref{Mnbdd} which holds for $r=n'$. Inequality \eqref{Mnbdd} can be seen as inequality \eqref{selfimproveineq} for $r=n'$ and the identity operator $T=\mathsf{Id}$.  Of course, inequality \eqref{Mnbdd} is proved using different techniques that use the fact that the identity  is a nice operator. In general we cannot expect inequality \eqref{selfimproveineq} to hold when $r=n'$.
\end{remark}

In Section \ref{proof} we present the proof of our main results including Theorems \ref{lorentzmax} and \ref{main}. Our main tool is the local Poincar\'e-Sobolev inequality in Lorentz spaces and we will also present some needed preliminaries.  In Section \ref{oneweight} we prove some new consequences of our pointwise inequalities in the one weight setting.  Section \ref{twoweight} contains a discussion of two weight inequalities for fractional integrals, which imply inequality \eqref{twoweightSISob}.  We obtain some new weak-type endpoint inequalities  in Section \ref{endpt}. 

\section{Proofs of our main results} \label{proof} 
We will now prove Theorems \ref{lorentzmax} and \ref{main} as well as Lemma \ref{A1thm}.  We begin with some preliminaries.  Given a function $f$ and measurable set $E$ with $0<|E|<\infty$ define
$$f_E=\dashint_E f=\frac{1}{|E|}\int_E f(x)\,dx.$$
To prove our results we will  use a Poincar\'e-Sobolev inequality in the scale of Lorentz spaces.  We recall some basic fact about Lorentz spaces that will be needed.  First, the Lorentz spaces are nested in $q$, namely, if $0<q_1< q_2\leq \infty$ then
$$\|f\|_{L^{p,q_2}(\mu)}\leq C\|f\|_{L^{p,q_1}(\mu)}$$
and hence $L^{p,q_1}(\mu)\subseteq L^{p,q_2}(\mu)$.  Second, we also have the following version of H\"older's inequality
$$\int_X|fg|\,d\mu\leq C \|f\|_{L^{p,q}(\mu)}\|g\|_{L^{p',q'}(\mu)}$$
which holds for all $1<p<\infty$ and $1\leq q\leq \infty$.

The classical Poincar\'e-Sobolev inequality 
\begin{equation}\label{ClassPoin}\left(\dashint_B |f(x)-f_B|^q\,dx\right)^{\frac1q}\leq C\,r(B)\left(\dashint_B|\nabla f(x)|^p\,dx\right)^{\frac1p}\end{equation}
holds for all balls $B$ with radius $r(B)$, $1\leq p<n$, $1\leq q\leq p^*=\frac{np}{n-p}$ and $f\in C_c^\infty(\R^n)$. Inequality  \eqref{ClassPoin} also holds for all cubes if the radius of the ball is replaced by sidelength of the cube. More on Poincar\'e and Poincar\'e-Sobolev inequalities is contained in the books \cite[Chapter 7]{GT} or \cite[Chapter 3]{KLV}.  We also mention the work of the third author and Rela \cite{PR} for several extensions of inequality \eqref{ClassPoin}.  

In our context we will need the following improvement of the Poincar\'e-Sobolev inequality in Lorentz spaces due to O'Neil \cite{ON} and Peetre \cite{Pee} (see also \cite{MP}). They showed that $f\in C^\infty_c(\R^n)$, then for any ball $B$
\begin{equation*}\label{rspoin}\|f-f_B\|_{L^{p^*,p}(B)}\leq Cr(B)\left(\dashint_B|\nabla f|^p\right)^{\frac1p},\end{equation*}
for $1\leq p<n$ where we are using the normalized Lorentz average.  In particular, when $p=1$, we have the following 
\begin{equation}\label{lorentzPoin}\|f-f_B\|_{L^{n',1}(B)}\leq C r(B)\dashint_B |\nabla f|\end{equation}
and again we remark that inequality \eqref{lorentzPoin} holds if we replace balls by cubes. 

\begin{proof}[Proof of Theorem \ref{lorentzmax}]  Fix $x\in \R^n$ and let $Q$ be a cube containing $x$. Since $\|\cdot\|_{L^{n',1}(Q)}$ is a norm 
\begin{align*}\|f\|_{L^{n',1}(Q)}\leq \|f-f_Q\|_{L^{n',1}(Q)}+\|f_Q\|_{L^{n',1}(Q)}\leq C\ell(Q)\dashint_Q|\nabla f|+|f_Q|
\end{align*}
where we have used the Poincar\'e inequality \eqref{lorentzPoin}. Taking the supremum over $Q$ containing $x$ yields the desired inequality
$$M_{L^{n',1}}f(x)\leq CM_1(|\nabla f|)(x)+Mf(x).$$
\end{proof}


\begin{proof}[Proof of Theorem \ref{main}] For $t>0$ let
$$T^t_\Omega f(x)=\int_{|y|>t}\frac{\Omega(y)}{|y|^n}f(x-y)\,dy$$
so that $T^\star_\Omega f=\sup_{t>0}|T^t_\Omega f|$.  Let $f\in C^\infty_c(\R^n)$, and pick $k_0$ so that $2^{k_0-2}<t\leq 2^{k_0-1}$ consider the decomposition into dyadic annuli
\begin{multline*}T^t_{\Omega} f(x)=\int_{t<|y|\leq2^{k_0-1}} \frac{\Omega(y)}{|y|^{n}}f(x-y)\,dy+\sum^\infty_{k=k_0} \int_{2^{k-1}<|y|\leq 2^{k}} \frac{\Omega(y)}{|y|^{n}}f(x-y)\,dy\\ =\int_{t<|y|\leq2^{k_0-1}}\frac{\Omega(y)}{|y|^{n}}(f(x-y)-c_{k_0-1})\,dy\\ +\sum_{k=k_0}^\infty \int_{2^{k-1}<|y|\leq 2^{k}}\frac{\Omega(y)}{|y|^{n}}(f(x-y)-c_k)\,dy,\end{multline*}
where we have used the zero average of $\Omega$ on $\Sn^{n-1}$ to add a constant $c_k$ to be chosen later.  We now have
\begin{align}
\lefteqn{T^\star_{\Omega} f(x)\leq \sum_{k\in \Z} \int_{2^{k-1}<|y|\leq 2^{k}} \frac{|\Omega(y)|}{|y|^{n}}|f(x-y)-c_k|\,dy}\nonumber\\
&\leq C\sum_{k\in \Z} \frac{1}{2^{kn}}\int_{|y|\leq 2^{k}} {|\Omega(y)|}|f(x-y)-c_k|\,dy\nonumber\\
&\leq C\sum_{k\in \Z} \|\Omega\|_{L^{n,\infty}(B(0,2^k))}\|f(x-\cdot)-c_k\|_{L^{n',1}(B(0,2^k))} \label{leftoff}
\end{align}
where we have used the Lorentz H\"older's inequality in the third line and the normalized Lorentz average as defined in \eqref{lorentzavg}.  Notice that
\begin{align*}|\{y\in B(0,2^k):|\Omega(y)|>\lambda\}|&=\int_{\R^n}\mathbf 1_{\{y\in B(0,2^k):|\Omega(y)|>\lambda\}}(y)\,dy \\
&=\int_0^{2^k}\int_{\mathbb S^{n-1}} \mathbf 1_{\{y'\in \Sn^{n-1}:|\Omega(y')|>\lambda\}}(y')r^{n-1}\,drd\sigma(y')\\
&=\frac{2^{kn}}{n}\sigma(\{y'\in \Sn^{n-1}:|\Omega(y')|>\lambda\}).
\end{align*}
Hence 
$$\|\Omega\|_{L^{n,\infty}(B(0,2^k))}=c_n\|\Omega\|_{L^{n,\infty}(\Sn^{n-1})}.$$
If we let $B_k=B(x,2^{k})$ and $c_k=f_{B_k}=\dashint_{B_k} f$, then we have
$$\|f(x-\cdot)-c_k\|_{L^{n',1}(B(0,2^k))}=\|f-f_{B_k}\|_{L^{n',1}(B_k)}.$$
Using these calculations in the sum \eqref{leftoff} we obtain 
\begin{equation*}T^\star_{\Omega} f(x)\leq C\|\Omega\|_{L^{n,\infty}(\mathbb{S}^{n-1})} \sum_{k\in \Z} \|f-f_{B_k}\|_{L^{n',1}(B_k)}\end{equation*}
where, again, we have the normalized Lorentz average.  The Lorentz Poincar\'e-Sobolev inequality \eqref{lorentzPoin} now gives
$$T^\star_{\Omega} f(x)\leq C\|\Omega\|_{L^{n,\infty}(\mathbb{S}^{n-1})}\sum_{k\in \Z}r(B_k)\dashint_{B_k}|\nabla f(y)|\,dy.$$
To finish the proof we will show that the sum on the right hand side of the inequality is bounded by $I_1(|\nabla f|)(x)$.  First notice that the sum converges for $f\in C^\infty_c(\R^n)$. Indeed, there exists a $K$ such that $\mathsf{supp} f\subseteq B_{K}$.  Hence,
\begin{multline*}\sum_{k\in \Z}r(B_k)\dashint_{B_k}|\nabla f(y)|\,dy=\sum_{k=-\infty}^{K-1}r(B_k)\dashint_{B_k}|\nabla f(y)|\,dy\\ +\sum_{k=K}^{\infty}r(B_k)\dashint_{B_k}|\nabla f(y)|\,dy.\end{multline*}
Since $|\nabla f|\leq C\chi_{B_{K}}$, each of these sums converge.  Indeed
$$\sum_{k=-\infty}^{K-1}r(B_k)\dashint_{B_k}|\nabla f(y)|\,dy\leq C\sum_{k=-\infty}^{K-1} 2^{k }<\infty$$
and
$$\sum_{k=K}^{\infty}r(B_k)\dashint_{B_k}|\nabla f(y)|\,dy\leq C|B_{K}|\sum_{k=K}^\infty 2^{k(1-n)}<\infty.$$
Next we have
\begin{align*}\lefteqn{\sum_{k\in \Z} r(B_k)\dashint_{B_k}|\nabla f(y)|\,dy=\sum_{k\in \Z}\frac{1}{\omega_n2^{k(n-1)}}\int_{|x-y|\leq2^k}|\nabla f(y)|\,dy}\\
&=\frac1{\omega_n}\sum_{k\in \Z}\frac{1}{2^{k(n-1)}}\int_{2^{k-1}<|x-y|\leq2^k}|\nabla f(y)|\,dy\\ 
&\qquad\qquad +\frac{1}{\omega_n}\sum_{k\in \Z}\frac{1}{2^{k(n-1)}}\int_{|x-y|\leq 2^{k-1}}|\nabla f(y)|\,dy\\
&\leq \frac1{\omega_n}\sum_{k\in \Z}\int_{2^{k-1}<|x-y|\leq2^k}\frac{|\nabla f(y)|}{|x-y|^{n-1}}\,dy\\
&\qquad\qquad +2^{1-n}\sum_{k\in \Z}r(B_{k-1})\dashint_{B_{k-1}}|\nabla f(y)|\,dy\\
&\leq \frac1{\omega_n}I_1(|\nabla f|)(x)+2^{1-n}\sum_{k\in \Z} r(B_k)\dashint_{B_k}|\nabla f(y)|\,dy.
\end{align*}
The factor $2^{1-n}<1$ and by rearranging this inequality we have
$$\sum_{k\in \Z} r(B_k)\dashint_{B_k}|\nabla f(y)|\,dy\leq \frac{1}{\omega_n(1-2^{1-n})}I_1(|\nabla f|)(x)$$
thus completing our proof.
\end{proof}
\begin{remark} It is possible to avoid the absorption argument in the proof by instead using a Poincar\'e inequality on annuli $A(x,r)=B(x,2r)\backslash B(x,r)$ since it is a particular case of a John domain. 
\end{remark}
We end this section with the proof of Lemma \ref{A1thm}.
\begin{proof}[Proof of Lemma \ref{A1thm}] Fix a cube $Q$, $x\in Q$, and let $0< r< n/(n-\al)$. We will show that
$$\left(\dashint_Q(I_\al \mu)^r\right)^{\frac1r}\leq C(I_\al \mu)(x).$$ 
Separate 
$$I_\al\mu(u)=\int_{\R^n}\frac{\mathbf 1_{2Q}(y)}{|u-y|^{n-\al}}\,d\mu(y)+\int_{\R^n}\frac{\mathbf 1_{(2Q)^c}(y)}{|u-y|^{n-\al}}\,d\mu(y).$$
The operator $I_\al$ is bounded from $L^1(\R^n)$ to $L^{(\frac{n}{\al})',\infty}(\R^n)$ and this can be extended to measures $\mu(\R^n)<\infty$, in particular
$$\|I_\al \mu\|_{L^{(\frac{n}{\al})',\infty}(\R^n)}\leq C\mu(\R^n).$$
Now for the local part, by Kolmogorov's inequality (or the normability of the weak-type spaces) since $r<(\frac{n}{\al})'$
\begin{align*}\left(\dashint_QI_\al(\mathbf 1_{2Q}\mu)^r\right)^{\frac1r}&\leq C|Q|^{-\frac{1}{(n/\al)'}}\|I_\al(\mathbf 1_{2Q}\mu)\|_{L^{(\frac{n}{\al})',\infty}(\R^n)}\\ 
&= C|Q|^{\frac{\al}{n}-1}\|I_\al(\mathbf 1_{2Q}\mu)\|_{L^{(\frac{n}{\al})',\infty}(\R^n)} \\
&\leq C\frac{1}{\ell(Q)^{n-\al}}\int_{2Q}\,d\mu(y)\\ 
&\leq C\int_{2Q}\frac{d\mu(y)}{|x-y|^{n-\al}} \\
&\leq CI_\al\mu(x).\end{align*}
For the non-local notice that for any $u\in Q$ and $y\in (2Q)^c$ we have
$$|x-y|\eqsim |u-y|$$
and in particular
$$\frac{1}{|u-y|^{n-\al}}\leq C \frac{1}{|x-y|^{n-\al}}, \quad u\in Q, y\in (2Q)^c.$$
Hence
$$I_\al(\mathbf 1_{(2Q)^c}\mu)(u)\leq C I_\al(\mathbf 1_{(2Q)^c}\mu)(x)\leq C(I_\al \mu)(x), \quad \forall u\in Q$$
and this implies
$$\left(\dashint_QI_\al(\mathbf 1_{(2Q)^c}\mu)^r\right)^{\frac1r}\leq C (I_\al\mu)(x).$$
In particular by combining both estimates we have
\begin{equation*}\left(\dashint_Q (I_\al\mu)^r\right)^{\frac1r}\leq \left(\dashint_QI_\al(\mathbf 1_{2Q}\mu)^r\right)^{\frac1r}+\left(\dashint_QI_\al(\mathbf 1_{(2Q)^c}\mu)^r\right)^{\frac1r} \leq C I_\al \mu(x)\end{equation*}
which completes the proof that $(I_\al \mu)^r$ is an $A_1$ weight.
\end{proof}

\section{One weight consequences} \label{oneweight}
In this section we study dependence on the constant $[w]_{A_{p,p^*}}$ in the inequality
\begin{equation}\label{depend}\|wTf\|_{L^{p^*}(\R^n)}\leq C_{[w]_{A_{p,p^*}}}\|w\nabla f\|_{L^p(\R^n)}.\end{equation}
First we will  discuss this dependence using previously known techniques for the individual operators. Specifically, we will consider the best bounds in the inequality $$T:L^{p^*}(w^{p^*})\ra L^{p^*}(w^{p^*}),$$ which are known or can be readily calculated for some operators. The best dependence in the embedding $\dot W^{1,p}(w^p)\hookrightarrow L^{p^*}(w^{p^*})$ is also known and by combining the two inequalities we obtain a bound on $C_{[w]_{A_{p,p^*}}}$.  This naive approach actually yields new results. Of course these calculations are futile when compared to the results that follow from our pointwise bounds, but we present them so the reader can fully appreciate the power of our pointwise bounds. We end the section with the improved bounds and a new Coifman-Fefferman inequality.

\subsection{Weighted estimates from previously known sharp bounds}  It is well-known that the Hardy-Littlewood maximal operator and Calder\'on-Zygmund operators are bounded on $L^p(w)$ for $1<p<\infty$ when the weight $w$ belongs to the $A_p$ class
$$[w]_{A_p}=\sup_Q \left(\dashint_Q w\right) \left(\dashint_Q w^{1-p'}\right)^{p-1}<\infty.$$

Meanwhile, as mentioned in the introduction the Riesz potential, $I_1$, and fractional maximal function, $M_1$, map $L^p(w^p)\ra L^{p^*}(w^{p^*})$ precisely when $1<p<n$, $\frac1{p^*}=\frac1p-\frac{1}{n}$, and $w\in A_{p,p^*}$
$$[w]_{A_{p,p^*}}=\sup_Q \left(\dashint_Q w^{p^*}\right) \left(\dashint_Q w^{-p'}\right)^{\frac{p^*}{p'}}<\infty.$$
Notice that $w\in A_{p,p^*}$ if and only if $w^{p^*}\in A_{1+\frac{p^*}{p'}}=A_{\frac{p^*}{n'}}$ and $[w]_{A_{p,p^*}}=[w^{p^*}]_{A_{\frac{p^*}{n'}}}$.  

For other operators under consideration, the story is significantly more complicated. For rough singular integrals, recall that if $\Omega\in L^r(\mathbb S^{n-1})$ and $p>r'$ then  
\begin{equation}\label{roughDuo} T_\Omega:L^p(w)\ra L^p(w), \quad \ \ \ w\in A_{\frac{p}{r'}}.\end{equation}
As far as we know, no weighted estimates for $T_\Omega$ when $\Omega \in L^{r,\infty}(\Sn^{n-1})$ have been considered. Weighted estimates for the spherical maximal function $\mathcal M_{\Sn^{n-1}}$ are also arduous to prove. They were investigated by Cowling, Garc\'ia-Cuerva, and Gunawan \cite{CGG} and Lacey \cite{Lac}. The actual conditions on the weights are too obscure to state here, but we do remark that some quantitative bounds for $\mathcal M_{\Sn^{n-1}}$ can be found in \cite{Lac}.



The quantitative dependence on weighted constants is a well studied area and the best possible bounds are known for many operators.  Buckley \cite{Buck} showed that the sharp dependence for the Hardy-Littlewood maximal operator is
\begin{equation}\label{buckbdd} \|Mf\|_{L^p(w)}\leq C[w]_{A_p}^{\frac{p'}{p}}\|f\|_{L^p(w)}.\end{equation}
The sharp bound for general Calder\'on-Zygmund operators
\begin{equation}\label{A2}\|Tf\|_{L^p(w)}\leq C[w]_{A_p}^{\max\{1,\frac{p'}{p}\}}\|f\|_{L^p(w)}, \qquad 1<p<\infty\end{equation}
 is due to Hyt\"onen \cite{Hyt1}. Meanwhile the sharp bound for $I_1$ was shown in \cite{LMPT} to be
\begin{equation}\label{fracint}\|wI_1 f\|_{L^{p^*}(\R^n)}\leq C[w]_{A_{p,p^*}}^{\frac{1}{n'}\max\{1,\frac{p'}{p^*}\}} \|wf\|_{L^p(\R^n)}.\end{equation}
The sharp dependence for the weighted Sobolev inequality \eqref{oneweightSob} is given by
\begin{equation}\label{sharpSob} \|wf\|_{L^{p^*}(\R^n)}\leq C[w]_{A_{p,{p^*}}}^{\frac1{n'}}\|w\nabla f\|_{L^p(\R^n)}\end{equation}
which is better than \eqref{fracint} in the range $1<p<\frac{2n}{n+1}$. The bound \eqref{sharpSob} can be seen from the truncation method (see \cite{LMPT}) and the example showing sharpness is contained in \cite{CM}.  

Quantitative bounds for rough singular integrals are much more difficult and few, if any, are known to be sharp. The best results in our context are due to Conde-Alonso, Culiuc, Di-Plinio, and Ou \cite{CCDPO}; Li, P\'erez, Rivera-R\'ios, and Roncal \cite{LPRR}; and Hyt\"onen, Roncal, Tapiola \cite{HRT}. In \cite{CCDPO} it is shown that if $\Omega\in L^r(\mathbb{S}^{n-1})$ and $q>r'$ then
\begin{equation} \label{roughconst} \|T_\Omega  f\|_{L^q(w)}\leq C[w]_{A_{\frac{q}{r'}}}^{\max\{1,\frac{1}{q-r'}\}} \|f\|_{L^q(w)}, \quad w\in A_{\frac{q}{r'}}.\end{equation}

Now consider the original problem of finding the quantitative dependence in \eqref{depend} for $T=M^k$ or $T=T_\Omega$. For the quantitative dependence in the inequality
$$\|wM^kf\|_{L^{p^*}(\R^n)}\leq C_{[w]_{A_{p,p^*}}}\|w\nabla f\|_{L^p(\R^n)}$$
we can first iterate inequality \eqref{buckbdd} to get
$$\|M^kf\|_{L^p(w)}\leq C[w]_{A_p}^{k\frac{p'}{p}}\|f\|_{L^p(w)}$$
which is actually sharp (see \cite{LPR}). However, this is less than optimal since $w^{p^*}$ belongs to the smaller class $A_{\frac{p^*}{n'}}(\subsetneq A_{p^*})$.  Instead we can use the following bound of Duoandikoetxea \cite{Duo2}: 
\begin{equation} \label{duobd} \|Mf\|_{L^q(w)}\leq C[w]_{A_s}^{\frac1q} \|f\|_{L^q(w)} \quad w\in A_s, \ \ 1\leq s<q.\end{equation}
Combining the bounds \eqref{duobd} with $s=\frac{p^*}{n'}$ and \eqref{sharpSob} yields
\begin{equation}\label{baditeratedM}\|wM^kf\|_{L^{p^*}(\R^n)} \leq C[w]_{A_{p,p^*}}^{\frac{k}{p^*}+\frac1{n'}}\|w\nabla f\|_{L^p(\R^n)},\end{equation}
which is new, but can be significantly improved.  

For rough singular integrals if we combine the bounds \eqref{sharpSob} and \eqref{roughconst} we obtain the following quantitative version of \eqref{onewTOmegaSob} when $\Omega\in L^r(\Sn^{n-1})$ with $r\geq n$
\begin{equation}\label{badbdd}\|wT_\Omega f\|_{L^{p^*}(\R^n)}\leq C[w]_{A_{p,{p^*}}}^{\max\{1,\frac{1}{{p^*}-n'}\}+\frac1{n'}}\|w\nabla f\|_{L^p(\R^n)}.\end{equation}

We may improve the bounds even more using $A_\infty$ constants.  Define $A_\infty=\bigcup_{p>1}A_p$.  A weight $w\in A_\infty$ can be quantified with the Fuji-Wilson constant
$$[w]_{A_\infty}=\sup_Q\frac1{w(Q)}\int_Q M({\mathbf 1}_Qw),$$
introduced in  Hyt\"onen and the third author \cite{HyPe}.
For bounds in terms of the constant $[w]_{A_\infty}$ it was shown, first in the case $p=2$ in \cite{HyPe} and for general $1<p<\infty$ in \cite{HyL, HyLP}, that
\begin{equation}\label{mixedCZO} \|Tf\|_{L^p(w)}\leq C[w]_{A_p}^{\frac1{p}}([w]_{A_\infty}^{\frac{1}{p'}}+[w^{1-p'}]_{A_\infty}^{\frac1p})\|f\|_{L^p(w)}.\end{equation}
Inequality \eqref{mixedCZO} is an improvement over inequality \eqref{A2} because
\begin{equation*}[w]_{A_p}^{\frac1{p}}([w]_{A_\infty}^{\frac{1}{p'}}+[w^{1-p'}]_{A_\infty}^{\frac1p})\leq[w]_{A_p}+[w^{1-p'}]_{A_{p'}}\eqsim [w]_{A_p}^{\max\{1,\frac{p'}{p}\}}.\end{equation*}
Cruz-Uribe and the second author \cite{CM2} extended this to the fractional integral case showing
\begin{equation}\label{mixedIal}\|wI_1 f\|_{L^{p^*}(\R^n)} \leq C[w]^{\frac1{p^*}}_{A_{p,p^*}}([w^{p^*}]_{A_\infty}^{\frac1{p'}}+[w^{-p'}]_{A_\infty}^{\frac1{p^*}})\| wf\|_{L^p(\R^n)}.\end{equation}
(see also \cite{R} for a particular situation of this inequality). Again this improves \eqref{fracint} because
$$ [w]^{\frac1{p^*}}_{A_{p,p^*}}([w^{p^*}]_{A_\infty}^{\frac1{p'}}+[w^{-p'}]_{A_\infty}^{\frac1{p^*}})\leq [w]^{\frac1{n'}}_{A_{p,p^*}}+[w]_{A_{p,p^*}}^{\frac{1}{n'}\frac{p'}{p^*}}\eqsim [w]_{A_{p,p^*}}^{\frac{1}{n'}\max\{1,\frac{p'}{p^*}\}}.$$
For rough singular integrals with $\Omega\in L^\infty(\mathbb S^{n-1})$ the only known such mixed $A_p$-$A_\infty$ result can be found in \cite{LPRR}: 
\begin{equation*}\label{roughmixed} \|T_\Omega f\|_{L^p(w)}\leq C[w]_{A_p}^{\frac1{p}}([w]_{A_\infty}^{\frac{1}{p'}}+[w^{1-p'}]_{A_\infty}^{\frac1p})\min\{[w]_{A_\infty},[w^{1-p'}]_{A_\infty}\}\|f\|_{L^p(w)}.\end{equation*}
We will not further pursue combining mixed $A_p$-$A_\infty$ weighted bounds for the Sobolev inequality.

\subsection{New bounds}
As a consequence of Corollary \ref{maintwocor} and inequality \eqref{mixedIal} we have the following.
\begin{corollary} \label{nicebdd} Suppose $T$ is any one of the operators mentioned in the statement of Corollary \ref{maintwocor}, $1<p<n$, and $p^*=\frac{np}{n-p}$. If $w\in A_{p,{p^*}}$, then
$$\|wTf\|_{L^{{p^*}}(\R^n)}\leq C[w]^{\frac1{p^*}}_{A_{p,{p^*}}}([w^{p^*}]_{A_\infty}^{\frac1{p'}}+[w^{-p'}]_{A_\infty}^{\frac1{p^*}})\|w\nabla f\|_{L^p(\R^n)}.$$
As a consequence, we have
\begin{equation*}\label{newgoodbdd}\|wTf\|_{L^{{p^*}}(\R^n)}\leq C[w]_{A_{p,{p^*}}}^{\frac1{n'}\max\{1,\frac{p'}{{p^*}}\}}\|w\nabla f\|_{L^p(\R^n)}.\end{equation*}
\end{corollary}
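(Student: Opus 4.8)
The plan is to reduce the corollary to the pointwise domination theorems of the previous section together with the sharp mixed $A_p$--$A_\infty$ estimate \eqref{mixedIal} for $I_1$, and then to carry out a short computation with the weight constants. First I would note that every operator $T$ listed in Corollary \ref{maintwocor} obeys the pointwise bound $|Tf(x)|\le CI_1(|\nabla f|)(x)$ for $f\in C_c^\infty(\R^n)$: this is Theorem \ref{main} for $T_\Omega$ and $T_\Omega^\star$ (with $C$ proportional to $\|\Omega\|_{L^{n,\infty}(\Sn^{n-1})}$), Corollary \ref{MOmega} for $M_\Omega$, inequality \eqref{lorentz} of Theorem \ref{lorentzmax} for $M_{L^{n',1}}$, inequalities \eqref{pointmax} and \eqref{maxiteratedptwise} for $M$ and $M^k$, and the computation preceding \eqref{weakbdMS} for $\mathcal M_{\Sn^{n-1}}$. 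Consequently $\|wTf\|_{L^{p^*}(\R^n)}\le C\,\|w\,I_1(|\nabla f|)\|_{L^{p^*}(\R^n)}$, and applying \eqref{mixedIal} with $|\nabla f|$ in place of $f$ yields the first displayed inequality of the corollary. (Equivalently, one may take $v=w^p$, $u=w^{p^*}$, $q=p^*$ in the two-weight inequality \eqref{twoweightSISob} of Corollary \ref{maintwocor}, which applies since $w\in A_{p,p^*}$ forces $I_1\colon L^p(w^p)\to L^{p^*}(w^{p^*})$, and then bound the operator norm $\|I_1\|_{\mathcal B(L^p(w^p),L^{p^*}(w^{p^*}))}$ by \eqref{mixedIal}.)

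For the ``as a consequence'' part I would run the elementary estimate already sketched after \eqref{mixedIal}. Since $w\in A_{p,p^*}$ is equivalent to $w^{p^*}\in A_{p^*/n'}$ with $[w^{p^*}]_{A_{p^*/n'}}=[w]_{A_{p,p^*}}$, and the $A_\infty$ constant is controlled by any $A_q$ constant, we have $[w^{p^*}]_{A_\infty}\le C[w]_{A_{p,p^*}}$; raising the $A_{p,p^*}$ condition to the power $p'/p^*$ shows $w^{-p'}\in A_{1+p'/p^*}$ with $[w^{-p'}]_{A_{1+p'/p^*}}=[w]_{A_{p,p^*}}^{p'/p^*}$, hence $[w^{-p'}]_{A_\infty}\le C[w]_{A_{p,p^*}}^{p'/p^*}$. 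Substituting these two bounds into the first displayed inequality and using $\tfrac1{p^*}+\tfrac1{p'}=\tfrac1{n'}$, the bracketed factor becomes at most $C\bigl([w]_{A_{p,p^*}}^{1/n'}+[w]_{A_{p,p^*}}^{\frac1{n'}\frac{p'}{p^*}}\bigr)$; since $[w]_{A_{p,p^*}}\ge 1$, whichever exponent is larger dominates, so this is $\eqsim[w]_{A_{p,p^*}}^{\frac1{n'}\max\{1,\,p'/p^*\}}$, giving the second displayed inequality.

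I do not expect a genuine obstacle. The only things that require attention are making sure the pointwise bound is available for each operator in the list \emph{with the correct constant} (which is precisely the content of Theorem \ref{main}, Corollary \ref{MOmega}, Theorem \ref{lorentzmax}, and the introductory computations), and the bookkeeping that collapses the $A_p$--$A_\infty$ product into a single power of $[w]_{A_{p,p^*}}$, which is routine. No completion argument is needed since the statement is for $f\in C_c^\infty(\R^n)$, though the inequality extends by density to all of $\dot W^{1,p}(w^p)$.
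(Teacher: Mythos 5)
Your proposal is correct and is essentially the paper's own proof: the paper explicitly states that Corollary \ref{nicebdd} follows from Corollary \ref{maintwocor} (the pointwise domination $|Tf|\le CI_1(|\nabla f|)$) combined with the mixed $A_{p,p^*}$--$A_\infty$ bound \eqref{mixedIal} for $I_1$, which is exactly your route. Your computation for the ``as a consequence'' part matches the chain of inequalities the paper records immediately after \eqref{mixedIal}, namely $[w]^{1/p^*}_{A_{p,p^*}}\bigl([w^{p^*}]_{A_\infty}^{1/p'}+[w^{-p'}]_{A_\infty}^{1/p^*}\bigr)\leq [w]^{1/n'}_{A_{p,p^*}}+[w]_{A_{p,p^*}}^{\frac{1}{n'}\frac{p'}{p^*}}\eqsim [w]_{A_{p,p^*}}^{\frac{1}{n'}\max\{1,p'/p^*\}}$ (note the small slip in phrasing: it is the full product, not just the bracketed factor, that reduces to this expression after using $\tfrac1{p^*}+\tfrac1{p'}=\tfrac1{n'}$).
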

\begin{remark} 
The quantitative dependence in Corollary \ref{nicebdd} is significantly better than the bounds from inequality \eqref{baditeratedM} for $M^k$ and inequality \eqref{badbdd} for $T_\Omega$.  For the spherical maximal operator, $\mathcal M_{\Sn^{n-1}}$, these estimates seem to be completely new in both the class of weights and the quantitative bounds. Likewise, the class of weights and bounds for $T_\Omega$ and $T^\star_\Omega$ are new under the assumption $\Omega\in L^{n,\infty}(\Sn^{n-1})$. Previously, weighted inequalities for $T_\Omega$ when $w^{p^*}\in A_{\frac{p^*}{n'}}$ are only known to hold when $\Omega\in L^r(\Sn^{n-1})$ for $r\geq n$. We also point out that the iterated maximal function bound has the form
$$\|wM^kf\|_{L^{{p^*}}(\R^n)}\leq C_k[w]_{A_{p,{p^*}}}^{\frac1{n'}\max\{1,\frac{p'}{{p^*}}\}}\|w\nabla f\|_{L^p(\R^n)}.$$
The constant $C_k$ will blow up as $k$ becomes large, but the dependence on $[w]_{A_{p,p^*}}$ is fixed for all $k$.  This  phenomenon is new. We do not know of any examples showing sharpness of these inequalities.
\end{remark}

We end this section with the following version of \eqref{CoifFeff} for fractional integrals due to 
Muckenhoupt and Wheeden \cite{MW},  
\begin{equation} \label{fraccontrol} \|I_\al f\|_{L^p(w)}\leq C\|M_\al f\|_{L^p(w)}, \quad w\in A_\infty, \ 0<p<\infty.  \end{equation}
This inequality was improved in  \cite{PW, CLPR} for a larger class of weights in the case $p\in(1,\infty)$. By using Theorem \ref{main} combined with inequality \eqref{fraccontrol} we obtain the following Coifman-Fefferman type estimate involving the gradient. In particular for rough singular integrals with $\Omega\in L^{n,\infty}(\Sn^{n-1})$ we have inequality \eqref{CFgrad}.

\begin{corollary} \label{weightcontrol} Suppose $T$ is any one of the operators mentioned in the statement of Corollary \ref{maintwocor}.  If $0<p<\infty$ and $w\in A_\infty$, then
$$\|Tf\|_{L^p(w)}\leq C\|M_1(|\nabla f|)\|_{L^p(w)}.$$
\end{corollary}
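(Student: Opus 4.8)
The plan is to deduce Corollary \ref{weightcontrol} by chaining together two facts already available: the pointwise control of each operator $T$ under consideration by $I_1(|\nabla f|)$, and the Coifman--Fefferman type estimate \eqref{fraccontrol} relating $I_\al$ and $M_\al$ with $A_\infty$ weights. Concretely, first I would recall that for every operator $T$ in the list of Corollary \ref{maintwocor} we have the pointwise bound $|Tf(x)| \le C\, I_1(|\nabla f|)(x)$ for $f \in C_c^\infty(\R^n)$: for $M$ and its iterates this is \eqref{pointmax}--\eqref{maxiteratedptwise}, for $M_{L^{n',1}}$ it is \eqref{lorentz}, for $\mathcal M_{\Sn^{n-1}}$ it is the explicit identity computed in the introduction, for $M_\Omega$ with $\Omega\in L^{n,\infty}$ it is Corollary \ref{MOmega}, and for $T_\Omega$ and $T^\star_\Omega$ with $\Omega\in L^{n,\infty}$ mean zero it is Theorem \ref{main} (note $|T_\Omega f| \le T^\star_\Omega f$, so the singular integral case is subsumed by the maximal one).

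Next I would fix $0<p<\infty$ and $w\in A_\infty$ and simply take $L^p(w)$ norms in the pointwise inequality, obtaining
\begin{equation*}
\|Tf\|_{L^p(w)} \le C\,\|I_1(|\nabla f|)\|_{L^p(w)}.
\end{equation*}
Then I would apply \eqref{fraccontrol} with $\al=1$ and the function $|\nabla f|$ in place of $f$, which is legitimate since $w\in A_\infty$ and $0<p<\infty$ are exactly the hypotheses of \eqref{fraccontrol}, to get
\begin{equation*}
\|I_1(|\nabla f|)\|_{L^p(w)} \le C\,\|M_1(|\nabla f|)\|_{L^p(w)}.
\end{equation*}
Combining the two displays gives $\|Tf\|_{L^p(w)} \le C\,\|M_1(|\nabla f|)\|_{L^p(w)}$, which is the claim.

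Honestly, there is no serious obstacle here: the corollary is a formal consequence of Theorem \ref{main} (together with the already-established pointwise bounds for the other operators) and the cited Muckenhoupt--Wheeden inequality \eqref{fraccontrol}. The only point requiring a word of care is that \eqref{fraccontrol} as stated applies to a generic function; one should note it is applied to $g=|\nabla f|\ge 0$, and that all the pointwise bounds hold for $f\in C_c^\infty(\R^n)$, which is the class in the statement, so no density or closure argument is needed at this stage. If one wanted the statement for $f$ in the homogeneous Sobolev space $\dot W^{1,p}(w)$ rather than just $C_c^\infty$, one would pass to the limit, but as phrased the corollary is stated for the operators applied to test functions and the proof is the two-line chain above.
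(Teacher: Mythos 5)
Your proposal is correct and is precisely the paper's argument: the authors explicitly state that Corollary \ref{weightcontrol} follows from the pointwise bound $|Tf|\leq C\,I_1(|\nabla f|)$ (Theorem \ref{main} and its companions in Corollary \ref{maintwocor}) combined with the Muckenhoupt--Wheeden inequality \eqref{fraccontrol} applied to $g=|\nabla f|$. Your two-line chain is exactly the intended proof, and your careful cataloguing of which pointwise estimate covers which operator simply spells out what the paper leaves implicit.
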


\section{Two weight consequences}\label{twoweight}

The general two weight Sobolev inequality 
\begin{equation}\label{twoSob} \|f\|_{L^q(u)}\leq C\|\nabla f\|_{L^p(v)}\end{equation}
has a long history and has been studied by several mathematicians (see  \cite{CWW, Fef, KerSaw, LN, Per95, Maz}). For example, the inequality
\begin{equation*}\label{uncert} \left(\int_{\R^3}\frac{|f(x)|^2}{|x|^2}\,dx\right)^{\frac12}\leq C\left(\int_{\R^3}{|\nabla f(x)|^2}\,dx\right)^{\frac12}\end{equation*}
is related to the uncertainty principle in mathematical physics (see Reed and Simon \cite[p.169]{RS}).  Again, \eqref{twoSob} follows from the pointwise bound \eqref{represent} and the boundedness of the Riesz potential operator $I_1:L^p(v)\ra L^q(u)$.   In fact, when $p\leq q$, inequality \eqref{twoSob} can be derived from the weak type boundedness $I_1:L^p(v)\ra L^{q,\infty}(u)$ via the truncation method of Maz'ya \cite{Maz} (see also \cite{Haj,LMPT,LN,PR}).  In particular, the inequality
\begin{equation}\label{weakSob} \|f\|_{L^q(u)}\leq C\|I_1\|_{\mathcal B(L^p(v),L^{q,\infty}(u))}\|\nabla f\|_{L^p(v)}\end{equation}
holds. In our case, inequality \eqref{twoSob} extends to 
\begin{equation*}\label{strongtwoweight}\|Tf\|_{L^q(u)}\leq C\|\nabla f\|_{L^p(v)}\end{equation*}
for any operator that satisfies $|Tf|\leq CI_1(|\nabla f|)$ provided the strong boundedness $I_1:L^p(v)\ra L^q(u)$ holds (Corollary \ref{maintwocor}).  The truncation method of Maz'ya cannot be applied when there is a general operator on the lefthand side of the inequality. Specifically, the following inequality
$$\|Tf\|_{L^{q}(u)}\leq C\|I_1\|_{\mathcal B(L^p(v),L^{q,\infty}(u))}\|\nabla f\|_{L^p(v)},$$
may not hold for a general operator satisfying $|Tf|\leq CI_1(|\nabla f|)$. Indeed, the operator $T=M_{L^{n'}}$ satisfies the weak inequality
$$\|M_{L^{n'}}f\|_{L^{n',\infty}(\R^n)}\leq C\|\nabla f\|_{L^1(\R^n)}$$
but does not satisfy the strong inequality.

The two weight boundedness of $$I_\al:L^p(v)\ra L^q(u)$$ is well studied.  This boundedness depends heavily on the relationship between $p$ and $q$ and splits into two cases: the strict upper triangular case $q<p$ and the lower triangular case $p\leq q$.  We point out the work of H\"anninen, Hyt\"onen, and Li \cite{HHL} and Vuorinen \cite{Vuo} who independently found unified characterizations for dyadic positive operators, related to $I_\al$, which hold for all $1<p,q<\infty$. The strict upper triangular case $q<p$ is the most difficult, and we mention the work of Verbitsky and others \cite{COV2,Ver}.  In the dyadic setting there are also characterizations in the case $q<p$ given by Tanaka \cite{Tan}.   

For the lower triangular case, i.e., $p\leq q$, a complete characterization of the weights such that $I_\al:L^p(v)\ra L^q(u)$ is known due to Sawyer \cite{Saw1} and \cite{Saw2}.  Sawyer showed that the Riesz potential is a bounded linear operator from $L^p(v)$ to $L^q(u)$ if and only if two testing conditions for $I_\al$ hold.
We also refer readers to the unpublished manuscript by Lacey, Sawyer, and Uriarte-Tuero \cite{LSU} and the survey article by Hyt\"onen \cite{Hyt2} for different proofs of the two weight testing characterizations for general dyadic operators. In our setting we have the following corollary.

\begin{corollary}  Suppose $T$ is any of the operators mentioned in statement of Corollary \ref{maintwocor}.  If $1<p\leq q<\infty$, and $(u,v)$ satisfy the testing conditions
\begin{equation*} \left(\int_QI_1({\mathbf 1}_Q v^{1-p'})^{q} u\right)^{\frac1{q}}\leq C\left(\int_Qv^{1-p'}\right)^\frac{1}{p}\end{equation*} 
and
\begin{equation*} \left(\int_QI_1({\mathbf 1}_Qu)^{p'} v^{1-p'}\right)^{\frac1{p'}}\leq C\left(\int_Q u\right)^\frac{1}{q'},\end{equation*} 
then
$$\|Tf\|_{L^q(u)}\leq C\|\nabla f\|_{L^p(v)}.$$
\end{corollary}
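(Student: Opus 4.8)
The plan is to read off the conclusion directly from Sawyer's two-weight characterization of fractional integrals, combined with the pointwise domination already recorded in Corollary \ref{maintwocor}. Write $\sigma = v^{1-p'}$ for the dual weight. The two displayed hypotheses are exactly the pair of Sawyer testing conditions — the ``direct'' one for $I_1$ tested against $\sigma$, and the ``dual'' one for the formal adjoint of $I_1$ tested against $u$ — for the two-weight inequality $I_1\colon L^p(v)\to L^q(u)$ in the lower-triangular range $1<p\le q<\infty$. By the theorem of Sawyer \cite{Saw1,Saw2}, in this range these conditions are not only necessary but also sufficient: they imply that $I_1$ is a bounded operator from $L^p(v)$ to $L^q(u)$, and moreover the operator norm $\|I_1\|_{\mathcal B(L^p(v),L^q(u))}$ is controlled by a dimensional multiple of the smallest constant $C$ admissible in the two testing conditions. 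Since $I_1$ is a positive operator, this boundedness applies in particular to the nonnegative function $|\nabla f|$.

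With the boundedness $I_1\colon L^p(v)\to L^q(u)$ in hand, the hypotheses of Corollary \ref{maintwocor} are met. Every operator $T$ in that list satisfies the pointwise bound $|Tf(x)|\le C\,I_1(|\nabla f|)(x)$ for $f\in C_c^\infty(\R^n)$ — this is the content of the pointwise estimates \eqref{pointmax} and \eqref{maxiteratedptwise} for $M$ and its iterates, Theorem \ref{lorentzmax} for $M_{L^{n',1}}$, Corollary \ref{MOmega} for $M_\Omega$, and Theorem \ref{main} for $T_\Omega$ and $T^\star_\Omega$ — so inequality \eqref{twoweightSISob} of Corollary \ref{maintwocor} yields
$$\|Tf\|_{L^q(u)}\le C\,\|I_1\|_{\mathcal B(L^p(v),L^q(u))}\,\|\nabla f\|_{L^p(v)}.$$
Feeding in the bound on $\|I_1\|_{\mathcal B(L^p(v),L^q(u))}$ from the first step produces $\|Tf\|_{L^q(u)}\le C\|\nabla f\|_{L^p(v)}$, which is the claim (and the density of $C_c^\infty(\R^n)$ in $\dot W^{1,p}(v)$ is already absorbed into Corollary \ref{maintwocor}).

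The argument is thus a straight concatenation of a known characterization with Corollary \ref{maintwocor}, and there is no substantial obstacle. The only points deserving a line of care are bookkeeping ones: verifying that the two stated conditions are literally Sawyer's conditions after the substitution $\sigma=v^{1-p'}$ and $\al=1$, checking which of the two is the direct condition and which is the dual, and confirming that the \emph{sufficiency} half of Sawyer's theorem is quantitative in the sense of controlling the operator norm by the testing constant. The restriction $p\le q$ is exactly what makes this clean characterization available; the strict upper-triangular range $q<p$ would require the more delicate machinery discussed above and is not pursued here.
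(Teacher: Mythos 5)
Your argument is exactly the one the paper intends: apply Sawyer's two-weight characterization (which the paper cites immediately before the corollary) to deduce $I_1\colon L^p(v)\to L^q(u)$ from the two testing conditions, and then invoke Corollary \ref{maintwocor} to transfer that boundedness to $T$. The paper gives no separate proof because this concatenation is considered immediate, and your write-up matches it.
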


While testing conditions are necessary and sufficient for boundedness, they are difficult to check in practice and depend on the operator itself. In the lower triangular case there are universal conditions that imply $I_\al:L^p(v)\ra L^q(u)$ which are sufficient and sharp.  The third author initiated the study of the so-called ``bump conditions'' which we describe below. A necessary, but not sufficient, condition for $I_\al:L^p(v)\ra L^{q}(u)$, is the following two weight $A^\al_{p,q}$ condition:
\begin{equation}\label{twoAp} \sup_Q |Q|^{\frac\al{n}+\frac1q-\frac1p}\left(\dashint_Qu\right)^{\frac1q}\left(\dashint_Qv^{1-p'}\right)^{\frac1{p'}}<\infty.\end{equation}
Sufficiently enlarging the averages in \eqref{twoAp} will imply the boundedness of $I_\al$. To understand this enlarging we require some background on Orlicz spaces. Let $\Theta$ be a Young function and define the Orlicz average of a measurable function to be 
$$\|f\|_{L^\Theta(Q)}=\inf\left\{\lambda>0:\dashint_Q\Theta\Big(\frac{|f|}{\lambda}\Big)\leq 1\right\}.$$
Notice that when $\Theta(t)=t^p$ we have that $\|\cdot\|_{L^\Theta(Q)}=\|\cdot\|_{L^p(Q)}$ is the $L^p$ average over $Q$ and the necessary condition \eqref{twoAp} can be written as
$$\sup_Q |Q|^{\frac\al{n}+\frac1q-\frac1p}\|u^{\frac1q}\|_{L^q(Q)}\|v^{-\frac1p}\|_{L^{p'}(Q)}<\infty.$$
The idea of bump conditions is to use the theory of Orlicz spaces to increase the $L^q$ and $L^{p'}$ averages as little as possible to obtain a sufficient condition.  Every Young function $\Theta$ has an associate function $\bar{\Theta}$ such that $\Theta^{-1}(t)\bar{\Theta}^{-1}(t)\eqsim t$. If $\Theta(t)=t^p$ then we have $\bar{\Theta}(t)\eqsim t^{p'}$.  Given $1<p<\infty$, we say $\Theta\in B_p$ if
$$\int_1^\infty \frac{\Theta(t)}{t^p}\frac{dt}{t}<\infty.$$
The third author introduced the $B_p$ integrability condition and used it to find sufficient bump conditions for $I_\al$ and more general potential operators in \cite{Per95a,Per94}.

\begin{theorem}[\cite{Per94}]\label{Perbump} Suppose $1<p\leq q<\infty$, and $(\Phi,\Psi)$ is a pair of Young functions. If $\bar{\Phi} \in B_{q'}$, $\bar{\Psi}\in B_p$ and $(u,v)$ satisfies 
\begin{equation}\label{Apqbump} \sup_Q |Q|^{\frac\al{n}+\frac1q-\frac1p}\|u^{\frac1q}\|_{L^\Phi(Q)}\|v^{-\frac1p}\|_{L^\Psi(Q)}<\infty,\end{equation}
then $I_\al:L^p(v)\ra L^q(u)$.
\end{theorem}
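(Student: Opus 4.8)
The plan is to prove Theorem~\ref{Perbump} along the classical dyadic route: discretize $I_\al$, dualize to a bilinear sum over cubes, strip the bumps using the generalized H\"older inequality in Orlicz spaces together with \eqref{Apqbump}, and then resum by a stopping-time argument. In that argument the $B_p$ and $B_{q'}$ hypotheses enter only through the boundedness of Orlicz maximal operators, while the hypothesis $p\le q$ supplies a convergent geometric series in the scale.

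\emph{Reduction and unbumping.} First I would replace $I_\al$ by a dyadic model: by the three-grid lemma there are finitely many dyadic lattices $\D^1,\dots,\D^{3^n}$ with $I_\al f\le C_n\sum_t\sum_{Q\in\D^t}\ell(Q)^\al\langle|f|\rangle_Q\,\mathbf 1_Q$, where $\langle F\rangle_Q=\dashint_Q F$, so it suffices to bound $I^\D_\al f=\sum_{Q\in\D}\ell(Q)^\al\langle f\rangle_Q\mathbf 1_Q$ for a fixed grid and $f\ge0$. Using duality of $L^q(u)$ with $L^{q'}(u)$ and positivity of the kernel, the desired inequality becomes
$$\sum_{Q\in\D}\ell(Q)^\al\,|Q|\,\langle f\rangle_Q\,\langle gu\rangle_Q\le C\,\|f\|_{L^p(v)}\|g\|_{L^{q'}(u)},\qquad f,g\ge0 .$$
Writing $h=f\,v^{1/p}$ and $\varphi=g\,u^{1/q'}$ (so $\|h\|_{L^p}=\|f\|_{L^p(v)}$ and $\|\varphi\|_{L^{q'}}=\|g\|_{L^{q'}(u)}$) and applying the generalized H\"older inequality $\dashint_Q|FG|\le 2\|F\|_{L^A(Q)}\|G\|_{L^{\bar A}(Q)}$ to the splittings $f=h\,v^{-1/p}$ and $gu=\varphi\,u^{1/q}$, one gets $\langle f\rangle_Q\le 2\|h\|_{L^{\bar\Psi}(Q)}\|v^{-1/p}\|_{L^\Psi(Q)}$ and $\langle gu\rangle_Q\le 2\|\varphi\|_{L^{\bar\Phi}(Q)}\|u^{1/q}\|_{L^\Phi(Q)}$. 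Since $\ell(Q)^\al=|Q|^{\al/n}$, the bump condition \eqref{Apqbump} is precisely $\ell(Q)^\al\|u^{1/q}\|_{L^\Phi(Q)}\|v^{-1/p}\|_{L^\Psi(Q)}\le C\,|Q|^{\frac1p-\frac1q}$, so the factor $\ell(Q)^\al$ cancels and I am reduced to the weight-free bilinear estimate
$$S:=\sum_{Q\in\D}|Q|^{1+\frac1p-\frac1q}\,\|h\|_{L^{\bar\Psi}(Q)}\,\|\varphi\|_{L^{\bar\Phi}(Q)}\ \le\ C\,\|h\|_{L^p}\,\|\varphi\|_{L^{q'}} .$$

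\emph{The summation.} Here lies the real work. I would perform a stopping time relative to the Orlicz average $\|h\|_{L^{\bar\Psi}(\cdot)}$: descend inside each principal cube to the maximal subcubes where this average more than doubles. Convexity of $\bar\Psi$ shows the principal family $\mathcal S_h$ is sparse (disjoint major subsets $E_P$, $|P|\le 2|E_P|$) and $\|h\|_{L^{\bar\Psi}(Q)}\le 2\|h\|_{L^{\bar\Psi}(\pi(Q))}$ for $\pi(Q)$ the minimal $\mathcal S_h$-cube above $Q$. Grouping $S$ by $\pi(Q)$ and summing first over scales --- using $\|\varphi\|_{L^{\bar\Phi}(Q)}\le M_{\bar\Phi}\varphi(x)$ for $x\in Q$ and $|Q|^{1+\frac1p-\frac1q}=|Q|^{\frac1p-\frac1q}\int_Q dx$ --- gives $\sum_{Q\subseteq P}|Q|^{1+\frac1p-\frac1q}\|\varphi\|_{L^{\bar\Phi}(Q)}\le C\,|P|^{1+\frac1p-\frac1q}\langle M_{\bar\Phi}\varphi\rangle_P$, the geometric series $\sum_{Q\ni x,\ Q\subseteq P}|Q|^{\frac1p-\frac1q}$ converging precisely because $p<q$. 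Thus $S\le C\sum_{P\in\mathcal S_h}|P|^{1+\frac1p-\frac1q}\|h\|_{L^{\bar\Psi}(P)}\langle M_{\bar\Phi}\varphi\rangle_P$. On each $E_P$ one has $\|h\|_{L^{\bar\Psi}(P)}\le M_{\bar\Psi}h$ and, with $\gamma:=n(\tfrac1p-\tfrac1q)$, $|P|^{\frac1p-\frac1q}\langle M_{\bar\Phi}\varphi\rangle_P=\ell(P)^\gamma\dashint_P M_{\bar\Phi}\varphi\le M_\gamma(M_{\bar\Phi}\varphi)$, so by disjointness of the $E_P$ and $|P|\le 2|E_P|$,
$$S\le C\int_{\R^n}M_{\bar\Psi}h\cdot M_\gamma(M_{\bar\Phi}\varphi)\le C\,\|M_{\bar\Psi}h\|_{L^p}\,\|M_\gamma(M_{\bar\Phi}\varphi)\|_{L^{p'}} .$$
Finally I would chain $M_\gamma\colon L^{q'}\to L^{p'}$ (valid since $\tfrac1{q'}-\tfrac\gamma n=\tfrac1{p'}$), $M_{\bar\Psi}\colon L^p\to L^p$ (this is $\bar\Psi\in B_p$), and $M_{\bar\Phi}\colon L^{q'}\to L^{q'}$ (this is $\bar\Phi\in B_{q'}$) to obtain $S\le C\|h\|_{L^p}\|\varphi\|_{L^{q'}}$, which finishes the proof.

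\emph{Main obstacle and a caveat.} The hard part is this last stage: arranging the interaction of $h$ and $\varphi$ against the two exponents $p\le q$ so that the surplus power $|Q|^{1/p-1/q}$ is spent exactly twice --- once to sum the dyadic scales inside a principal cube, once to feed a fractional maximal operator --- while the $B_p$ and $B_{q'}$ hypotheses are used only as $L^p$ and $L^{q'}$ bounds for Orlicz maximal functions. I should also note that the argument as written requires $p<q$: when $p=q$ one has $\gamma=0$ and the geometric series over subcubes diverges, so one first dominates $I^\D_\al$ by a genuinely sparse fractional operator and then runs the same stopping time on that sparse family, which removes the divergent sum.
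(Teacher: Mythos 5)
The paper states Theorem~\ref{Perbump} as a citation of \cite{Per94} and contains no proof of it, so there is nothing in the text to compare your argument against line by line; I will judge it on its own. For $p<q$ your argument is correct: the dyadic reduction and dualization are standard, the generalized H\"older unbumping is applied correctly to the factorizations $f=h\,v^{-1/p}$ and $gu=\varphi\,u^{1/q}$, the bump condition \eqref{Apqbump} strips the weights exactly as you claim, and the stopping-time step is sound --- the convexity inequality $\bar\Psi(t/2)\le\tfrac12\bar\Psi(t)$ gives sparseness of the principal cubes, the in-corona sum over scales produces $|P|^{1+1/p-1/q}\langle M_{\bar\Phi}\varphi\rangle_P$ via the geometric series $\sum_{Q\ni x,\,Q\subseteq P}|Q|^{1/p-1/q}$, and the passage to the disjoint major sets $E_P$ together with $M_\gamma\colon L^{q'}\to L^{p'}$, $\bar\Psi\in B_p$, $\bar\Phi\in B_{q'}$ closes the estimate. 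In substance this is the discretization-plus-principal-cubes argument of P\'erez (modeled on Sawyer's two-weight maximal theorem) recast in sparse/corona language.

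The genuine gap is $p=q$, which the theorem asserts and your geometric series does not cover; you flag this yourself. Your proposed fix is right in principle, and in fact once one dominates $I^{\D}_\al$ by a sparse operator the corona on $h$ becomes unnecessary: the cube family is already $\eta$-Carleson, so one writes $|Q|\le\eta^{-1}|E_Q|$, bounds $\|h\|_{L^{\bar\Psi}(Q)}\le M_{\bar\Psi}h$ and $\ell(Q)^\gamma\|\varphi\|_{L^{\bar\Phi}(Q)}\le\ell(Q)^\gamma\dashint_Q M_{\bar\Phi}\varphi\le M_\gamma(M_{\bar\Phi}\varphi)$ pointwise on $E_Q$, and sums over the disjoint $E_Q$ to land directly at $\int M_{\bar\Psi}h\cdot M_\gamma(M_{\bar\Phi}\varphi)$. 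That route also subsumes the $p<q$ case and is shorter than the stopping time you ran. As left, however, the $p=q$ branch of the theorem is a sketch, not a proof; you should either run the sparse-domination argument for the full range $1<p\le q<\infty$ or state and invoke the sparse domination of $I_\al$ explicitly and finish that case.
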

 For $\delta>0$, the Young functions 
$$\bar{\Phi}(t)=\frac{t^{q'}}{\log(\mathsf{e}+t)^{1+\frac{q}{q'}\delta}} \ \ \text{and} \ \ \bar{\Psi}(t)=\frac{t^{p}}{\log(\mathsf{e}+t)^{1+\frac{p}{p'}\delta}}$$
satisfy $\bar{\Phi}\in B_{q'}$ and $\bar{\Psi}(t)\in B_p$ respectively.  The associate Young functions become 
$$\Phi(t)\eqsim t^q\log(\mathsf{e}+t)^{\frac{q}{q'}+\delta} \ \ \text{and} \ \ {\Psi}(t)\eqsim {t^{p'}}{\log(\mathsf{e}+t)^{\frac{p'}{p}+\delta}}$$
and thus the condition
\begin{equation}\label{doublelogbump} \sup_Q |Q|^{\frac{\al}{n}+\frac1q-\frac1p}\|u^\frac1q\|_{L^q(\log L)^{\frac{q}{q'}+\delta}(Q)}\|v^{-\frac1p}\|_{L^{p'}(\log L)^{\frac{p'}{p}+\delta}(Q)}<\infty\end{equation}
is sufficient for $I_\al:L^p(v)\ra L^q(u)$.  The work of Cruz-Uribe and the second author \cite{CM1,CM2} improved condition \eqref{Apqbump} twofold when $p<q$.  First, they were able to work with separated bump conditions for the strong type inequality.  Second they introduced a better integrability condition known as the $B_{p,q}$ condition:
$$\int_1^\infty\frac{\Phi(t)^{\frac{q}{p}}}{t^q}\frac{dt}{t}<\infty.$$
It is easy to see that $B_p\subsetneq B_{p,q}$ when $p<q$ and thus $B_{p,q}$ is a more general condition that allows for a larger class of Young functions. They proved the following theorem.
\begin{theorem}[\cite{CM2}] \label{sepbump} Suppose $1<p<q<\infty$ and $(u,v)$ are weights.  If $\Phi$ and $\Psi$ are Young functions that satisfy $\bar{\Phi}\in B_{q',p'}$, $\bar{\Psi}\in B_{p,q}$, and $(u,v)$ satisfy
\begin{multline*}\sup_Q|Q|^{\frac{\al}{n}+\frac1q-\frac1p}\|u^\frac1q\|_{L^\Phi(Q)}\left(\dashint_Q v^{1-p'}\right)^{\frac1{p'}}\\+\sup_Q|Q|^{\frac{\al}{n}+\frac1q-\frac1p}\left(\dashint_Q u\right)^{\frac1{q}}\|v^{-\frac1p}\|_{L^\Psi(Q)}<\infty\end{multline*}
then $I_\al:L^p(v)\ra L^q(u).$ 
\end{theorem}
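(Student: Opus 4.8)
The plan is to pass from the continuous operator $I_\al$ to a sparse dyadic model, dualize to a positive bilinear form associated to the measures $d\sigma=v^{1-p'}\,dx$ and $d\omega=u\,dx$, and then control that form by a pair of parallel stopping-time (corona) decompositions, one adapted to $\sigma$ and one to $\omega$. In this scheme the bumps $\Phi$ and $\Psi$ enter through the generalized H\"older inequality in Orlicz spaces, while the integrability hypotheses $\bar\Phi\in B_{q',p'}$ and $\bar\Psi\in B_{p,q}$ appear as exactly the conditions under which the resulting sums of Orlicz averages can be summed with a genuine gain of integrability; this mirrors, in the off-diagonal setting $p<q$, the role played by the $B_p$ condition and by the $L^p$-boundedness of the Orlicz maximal operator in the one-weight theory and in the proof of Theorem \ref{Perbump}.

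First I would reduce to a sparse dyadic operator. For $f\ge 0$ the standard pointwise bound gives $I_\al f\lesssim\sum_{j=1}^{3^n}\mathcal A^{\mathcal D_j}_\al f$, where $\mathcal A^{\mathcal D}_\al F=\sum_{Q\in\mathcal D}\ell(Q)^\al\langle F\rangle_Q\mathbf 1_Q$ and $\mathcal D_1,\dots,\mathcal D_{3^n}$ are the usual translated dyadic lattices; since the hypotheses are invariant under translations and dilations of the cube family, it is enough to bound a single $\mathcal A^{\mathcal D}_\al$, and a routine sparsification dominates $\mathcal A^{\mathcal D}_\al f$ pointwise by $\mathcal A^{\mathcal S}_\al f$ for a sparse subfamily $\mathcal S\subseteq\mathcal D$. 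Because $\mathcal A^{\mathcal S}_\al$ is positive and self-adjoint, two applications of duality reduce matters to the bilinear estimate
$$\sum_{Q\in\mathcal S}\ell(Q)^\al\,\frac{\sigma(Q)\,\omega(Q)}{|Q|}\,\langle g\rangle_Q^\sigma\,\langle h\rangle_Q^\omega\ \lesssim\ \|g\|_{L^p(\sigma)}\,\|h\|_{L^{q'}(\omega)},\qquad g,h\ge 0,$$
uniformly over sparse families $\mathcal S$, where $\langle g\rangle_Q^\sigma=\sigma(Q)^{-1}\int_Q g\,d\sigma$ and likewise for $\omega$. Elementary algebra rewrites the coefficient $\ell(Q)^\al\sigma(Q)\omega(Q)/|Q|$ as the two-weight $A^\al_{p,q}$ quantity of (\ref{twoAp}) multiplied by $\sigma(Q)^{1/p}\omega(Q)^{1/q'}$, and the generalized H\"older inequality in the form $\dashint_Q hu\lesssim\|h\,u^{1/q'}\|_{L^{\bar\Phi}(Q)}\,\|u^{1/q}\|_{L^\Phi(Q)}$, together with its mirror image on the $\sigma$-side involving $\Psi$, replaces that $A^\al_{p,q}$ quantity by the finite suprema assumed in the statement.

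Next I would estimate the sparse sum by building, inside each top cube of $\mathcal S$, a corona $\mathcal F$ for $g$ relative to $\sigma$ (a cube is a stopping cube when its $\sigma$-average of $g$ exceeds twice that of its $\mathcal F$-parent) and, independently, a corona $\mathcal G$ for $h$ relative to $\omega$. Assigning to each $Q\in\mathcal S$ its minimal stopping ancestors $F(Q)\in\mathcal F$ and $G(Q)\in\mathcal G$ gives $\langle g\rangle_Q^\sigma\le 2\langle g\rangle_{F(Q)}^\sigma$ and $\langle h\rangle_Q^\omega\le 2\langle h\rangle_{G(Q)}^\omega$, and the bilinear sum is split according to the pair $(F,G)$ and then, by symmetry, according to whether $G\subseteq F$ or $F\subseteq G$. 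In the case $G\subseteq F$, for fixed ancestors the inner sum over the cubes $Q\subseteq G$ is handled by the sparseness of $\mathcal S$ and the two stopping rules and collapses to a single term controlled by the product of $\langle g\rangle_F^\sigma$, an $L^{\bar\Phi}$-average of $h\,u^{1/q'}$ over $G$, and the bumped $A^\al_{p,q}$ constant. There remains the summation over stopping pairs: summing first in $G$ inside a fixed $F$ amounts to a sparse Carleson embedding for the sequence $\{\|h\,u^{1/q'}\|_{L^{\bar\Phi}(G)}\}_{G\in\mathcal G}$ with a gain of integrability, valid precisely because $\bar\Phi\in B_{q',p'}$; summing then in $F$ uses the usual Carleson estimate $\sum_F(\langle g\rangle_F^\sigma)^{p}\,\sigma(F)\lesssim\|g\|_{L^p(\sigma)}^p$ together with H\"older between the exponents $p$ and $q'$. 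The case $F\subseteq G$ is identical after interchanging the roles of $(\sigma,g,\Phi,B_{q',p'})$ and $(\omega,h,\Psi,B_{p,q})$.

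The main obstacle is this last step of summing the Orlicz-bumped local pieces. Because $p\neq q$ one cannot merely iterate the classical on-diagonal, $B_p$-based argument; one needs the sparse Carleson embedding of the Orlicz averages to hold with a true improvement of integrability, and the content of the refined $B_{p,q}$ condition introduced by Cruz-Uribe and the second author is precisely that such an embedding holds if and only if the relevant bump lies in $B_{p,q}$ (equivalently, that the associated fractional Orlicz maximal operator satisfies the sharp off-diagonal bound). Establishing this embedding is the technical heart of the argument; the remainder is the bookkeeping required to run the two coronas in parallel, which is possible exactly because the two bumps in the hypothesis appear \emph{separately} rather than coupled into a single joint-bump condition.
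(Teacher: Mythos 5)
This statement is not proved in the paper at all: it is Theorem~\ref{sepbump}, which the authors import verbatim from Cruz-Uribe and Moen \cite{CM2} (``A fractional Muckenhoupt--Wheeden theorem and its consequences'') as a black box, and they then apply it to deduce Sobolev-type corollaries for the operators covered by Corollary~\ref{maintwocor}. There is therefore no in-paper argument against which to check your proposal.

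As to the proposal itself, the architecture you sketch---dyadic reduction, sparsification, dualization to a positive bilinear form with respect to $d\sigma=v^{1-p'}dx$ and $d\omega=u\,dx$, and a parallel stopping-time decomposition in which the two Orlicz bumps enter through the generalized H\"older inequality and are summed via $B_{p,q}$-type Carleson embeddings---is a reasonable modern route and captures the essential reason the hypotheses are what they are. It is not, however, literally the route of \cite{CM2}. There the argument is factored through a ``fractional Muckenhoupt--Wheeden'' theorem: one first shows that the two-weight boundedness of $I_\al$ is implied by the simultaneous boundedness of the fractional maximal operator $M_\al:L^p(v)\to L^q(u)$ and of a dual fractional maximal operator on the conjugate spaces, and then one separately proves sufficient single-bump conditions for each of those maximal estimates, with the $B_{p,q}$ class arising as the sharp integrability condition for the associated Orlicz fractional maximal operator $M_{\al,\Psi}:L^p\to L^q$. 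Your proposal collapses these two stages into one corona argument on the bilinear form. That is a legitimate alternative (and closer in spirit to later sparse-domination treatments), but the heavy lifting you defer---the ``sparse Carleson embedding of Orlicz averages with a gain of integrability valid precisely because $\bar\Phi\in B_{q',p'}$''---is essentially equivalent to the $M_{\al,\Phi}$ boundedness result, so you are not avoiding the technical core, only repackaging it. If you intend to write this out, you should either prove that Carleson embedding directly or explicitly invoke the Orlicz maximal operator theorem from \cite{CM1,CM2}; asserting it as ``the content of the refined $B_{p,q}$ condition'' without a reference or proof leaves the central step unjustified.
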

As mentioned above, Theorem \ref{sepbump} improves Theorem \ref{Perbump} in two ways, but only holds in the case $p<q$.  For example, the following separated logarithmic bump condition 
\begin{multline*}\sup_Q|Q|^{\frac{\al}{n}+\frac1q-\frac1p}\|u^\frac1q\|_{L^q(\log L)^{\frac{q}{p'}+\delta}(Q)}\left(\dashint_Q v^{1-p'}\right)^{\frac1{p'}}\\+\sup_Q|Q|^{\frac{\al}{n}+\frac1q-\frac1p}\left(\dashint_Q u\right)^{\frac1{q}}\|v^{-\frac1p}\|_{L^{p'}(\log L)^{\frac{p'}{q}+\delta}(Q)}<\infty\end{multline*}
is sufficient for the boundedness of $I_\al:L^p(v)\ra L^q(u)$ when $p<q$. Not only are there two terms with simple $L^{p'}$ and $L^q$ averages, but there are also smaller powers on the logarithms than those in \eqref{doublelogbump}, since $\frac{q}{p'}<\frac{q}{q'}$ and $\frac{p'}{q}<\frac{p'}{p}$, if $p<q$.

When $p=q$ the $B_{p,q}$ condition collapses to the $B_p$ condition. Surprisingly, in the diagonal case $p=q$, the best sufficient separated bump conditions for the boundedness of $I_\al$ from $L^p(v)$ to $L^p(u)$ are 
\begin{multline*}\sup_Q|Q|^{\frac{\al}{n}}\|u^\frac1p\|_{L^p(\log L)^{2p-1+\delta}(Q)}\left(\dashint_Q v^{1-p'}\right)^{\frac1{p'}}\\+\sup_Q|Q|^{\frac{\al}{n}}\left(\dashint_Q u\right)^{\frac1{p}}\|v^{-\frac1p}\|_{L^{p'}(\log L)^{2p'-1+\delta}(Q)}<\infty.\end{multline*}
This result can be found in the book \cite{CMP}. The powers on the logarithms are not ideal and should hold for $\Phi(t)=t^p\log(\mathsf{e}+t)^{p-1+\delta}$ and $\Psi(t)=t^{p'}\log(\mathsf{e}+t)^{p'-1+\delta}$. It is an open question as to whether or not the larger power can be removed. We summarize the results as applications of our main theorem.

\begin{corollary} Suppose $T$ is any of the operators mentioned in the statement of Corollary \ref{maintwocor}.  If $1<p< q<\infty$ and $(u,v)$ are weights that satisfy the separated bump condition
\begin{multline*}\sup_Q|Q|^{\frac{1}{n}+\frac1q-\frac1p}\|u^\frac1q\|_{L^\Phi(Q)}\left(\dashint_Q v^{1-p'}\right)^{\frac1{p'}}\\+\sup_Q|Q|^{\frac{1}{n}+\frac1q-\frac1p}\left(\dashint_Q u\right)^{\frac1{q}}\|v^{-\frac1p}\|_{L^\Psi(Q)}<\infty,\end{multline*}
then $T:\dot W^{1,p}(v)\ra L^q(u)$ with
$$\|Tf\|_{L^q(u)}\leq C\|\nabla f\|_{L^p(v)}.$$
\end{corollary}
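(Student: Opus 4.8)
The plan is to deduce this corollary by chaining two results already in hand: the two-weight bump bound for the Riesz potential (Theorem \ref{sepbump}) and the transference of two-weight Sobolev estimates from $I_1$ to $T$ (Corollary \ref{maintwocor}). As in Theorem \ref{sepbump}, the Young functions $\Phi,\Psi$ appearing in the separated bump condition are understood to satisfy $\bar\Phi\in B_{q',p'}$ and $\bar\Psi\in B_{p,q}$.

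First I would apply Theorem \ref{sepbump} with $\al=1$. Since $1<p<q<\infty$ and $\frac{\al}{n}+\frac1q-\frac1p=\frac1n+\frac1q-\frac1p$ when $\al=1$, the separated bump hypothesis in the present statement is exactly the hypothesis of Theorem \ref{sepbump}, so we obtain the strong two-weight bound $I_1:L^p(v)\to L^q(u)$ with finite operator norm $\|I_1\|_{\mathcal B(L^p(v),L^q(u))}$.

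Then I would feed this into Corollary \ref{maintwocor}. Every operator $T$ in its list satisfies the pointwise bound $|Tf(x)|\le CI_1(|\nabla f|)(x)$ for $f\in C_c^\infty(\R^n)$ --- this is where Theorem \ref{main} (and, for the maximal operators, the pointwise inequalities preceding it) does the real work --- so inequality \eqref{twoweightSISob} applies. Combining it with the boundedness of $I_1$ just obtained yields $T:\dot W^{1,p}(v)\to L^q(u)$ together with
$$\|Tf\|_{L^q(u)}\le C\|I_1\|_{\mathcal B(L^p(v),L^q(u))}\|\nabla f\|_{L^p(v)}\le C\|\nabla f\|_{L^p(v)},$$
which is the claimed estimate.

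I do not expect a genuine obstacle at this stage: all the analytic difficulty has been localized in Theorem \ref{main} (the pointwise domination by $I_1(|\nabla f|)$, proved via the Lorentz Poincar\'e--Sobolev inequality \eqref{lorentzPoin}) and in Theorem \ref{sepbump} (the Orlicz bump theory for fractional integrals). What remains is only bookkeeping --- matching the exponent $\frac1n+\frac1q-\frac1p$ to $\frac{\al}{n}+\frac1q-\frac1p$ with $\al=1$, and checking that the range $1<p<q<\infty$ is admissible in both cited theorems, which it is.
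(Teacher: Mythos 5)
Your proposal is correct and follows exactly the intended route: instantiate Theorem \ref{sepbump} at $\al=1$ to get $I_1:L^p(v)\ra L^q(u)$ from the separated bump hypothesis, then invoke inequality \eqref{twoweightSISob} from Corollary \ref{maintwocor}. You also rightly point out that the Young-function conditions $\bar\Phi\in B_{q',p'}$, $\bar\Psi\in B_{p,q}$ must be read into the corollary's hypotheses from Theorem \ref{sepbump}.
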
 

\begin{corollary} Suppose $T$ is any of the operators mentioned in the statement of Corollary \ref{maintwocor} and $1<p<\infty$. If $(u,v)$ satisfy either the separated logarithmic bump condition
\begin{multline*}\sup_Q|Q|^{\frac1{n}}\|u^\frac1p\|_{L^p(\log L)^{2p-1+\delta}(Q)}\left(\dashint_Q v^{1-p'}\right)^{\frac1{p'}}\\+\sup_Q|Q|^{\frac{1}{n}}\left(\dashint_Q u\right)^{\frac1{p}}\|v^{-\frac1p}\|_{L^{p'}(\log L)^{2p'-1+\delta}(Q)}<\infty,\end{multline*}
 for some $\delta>0$, or the joint bump condition 
$$\sup_Q |Q|^{\frac1{n}}\|u^{\frac1p}\|_{L^\Phi(Q)}\|v^{-\frac1p}\|_{L^\Psi(Q)}<\infty$$
for some $\bar{\Phi}\in B_{p'}$ and $\bar{\Psi}\in B_p$, then $T:\dot W^{1,p}(v)\ra L^p(u)$ with
$$\|T f\|_{L^p(u)}\leq C\|\nabla f\|_{L^p(v)}.$$
\end{corollary}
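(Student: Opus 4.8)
The plan is to reduce the corollary to the two-weight boundedness of the Riesz potential $I_1$ and then apply Corollary \ref{maintwocor}. First I would recall that every operator $T$ in the list of Corollary \ref{maintwocor} satisfies the pointwise domination $|Tf(x)|\leq CI_1(|\nabla f|)(x)$ for $f\in C_c^\infty(\R^n)$: this is the content of Theorem \ref{main} for $T_\Omega$ and $T^\star_\Omega$, and of the several pointwise bounds collected in the introduction for the remaining operators ($M^k$, $M_{L^{n',1}}$, $\mathcal M_{\Sn^{n-1}}$, $M_\Omega$). Hence, by the strong-type conclusion \eqref{twoweightSISob} of Corollary \ref{maintwocor} specialized to $q=p$, it is enough to verify that $I_1:L^p(v)\ra L^p(u)$ under each of the two bump hypotheses; the bound $\|Tf\|_{L^p(u)}\leq C\|I_1\|_{\mathcal B(L^p(v),L^p(u))}\|\nabla f\|_{L^p(v)}$, and in particular the mapping $T:\dot W^{1,p}(v)\ra L^p(u)$, then follows immediately.

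It remains to establish $I_1:L^p(v)\ra L^p(u)$ in each case. For the joint bump condition I would invoke Theorem \ref{Perbump} with $\al=1$ and $q=p$: in that regime its hypotheses read $\bar\Phi\in B_{p'}$, $\bar\Psi\in B_p$ together with $\sup_Q|Q|^{\frac1n}\|u^{\frac1p}\|_{L^\Phi(Q)}\|v^{-\frac1p}\|_{L^\Psi(Q)}<\infty$, which is exactly the hypothesis assumed here, so $I_1:L^p(v)\ra L^p(u)$ follows. For the separated logarithmic bump condition I would instead quote the diagonal-case separated bump result recorded in \cite{CMP} and described just above the statement, which asserts precisely that the pair of conditions carrying the logarithmic bumps $L^p(\log L)^{2p-1+\delta}(Q)$ and $L^{p'}(\log L)^{2p'-1+\delta}(Q)$ is sufficient for $I_\al:L^p(v)\ra L^p(u)$; setting $\al=1$ gives what we need. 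Combining either of these with the reduction in the previous paragraph completes the argument and also yields that $T$ maps $\dot W^{1,p}(v)$ into $L^p(u)$.

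I do not expect a genuine obstacle here: the analytic substance has already been carried out in Theorem \ref{main} (the pointwise bound) and in the cited two-weight theory for $I_\al$, so what remains is assembly. The only point deserving a moment of care is the normalization bookkeeping --- checking that the bump conditions as stated, with the geometric factor $|Q|^{\frac\al n+\frac1q-\frac1p}$ of Theorem \ref{Perbump} (which becomes $|Q|^{\frac1n}$ when $\al=1$ and $q=p$) and the normalized Orlicz averages $\|u^{\frac1p}\|_{L^\Phi(Q)}$, $\|v^{-\frac1p}\|_{L^\Psi(Q)}$, match verbatim the hypotheses of Theorem \ref{Perbump} and of the $L^p(\log L)$ result from \cite{CMP} upon setting $\al=1$ and $q=p$ --- but this is routine.
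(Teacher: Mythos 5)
Your proposal is exactly the intended argument: the paper states this corollary as an assembly of Corollary \ref{maintwocor} with the bump sufficiency results for $I_1$ (Theorem \ref{Perbump} with $\al=1$, $q=p$ for the joint condition, and the diagonal separated logarithmic bump result from \cite{CMP}). There is no gap and no deviation from the paper's route.
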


\section{Endpoint results} \label{endpt}

As mentioned in the introduction, the weak-type endpoint behavior for several of the operators under consideration is very difficult and many open questions still remain. For example, even in the non-weighted setting, the inequalities
$$\|\mathcal M_{\Sn^{n-1}}f\|_{L^{n',\infty}(\R^n)}\leq C\|\nabla f\|_{L^1(\R^n)}$$
and
$$\| M_{L^{n',1}}f\|_{L^{n',\infty}(\R^n)}\leq C\|\nabla f\|_{L^1(\R^n)}$$
are new and do not follow from known mapping properties of the operators.  

When $1\leq p\leq q$ it turns out that one of the testing conditions characterizes the $I_\al:L^p(v)\ra L^{q,\infty}(u)$ boundedness \cite{Saw1}.  In fact there is a version of these testing conditions when $p=1<q$, but we will not pursue this here.  Instead we examine two other weak type inequalities for $I_1$. 

Let $\mu$ be a measure. Then $\|\cdot\|_{L^{n',\infty}(\mu)}$ is equivalent to a norm since $n'>1$, and by the Minkowski integral inequality we see that
$$\|I_1 g\|_{L^{n',\infty}(\mu)}\leq C\int_{\R^n}|g(y)| \big\| |\cdot -y|^{1-n}\big\|_{L^{n',\infty}(\mu)}\,dy\eqsim \int_{\R^n}|g|(M\mu)^{\frac1{n'}},$$
(see \cite{PR} for the specific calculations). If we define $w\in A_{1,n'}$ to mean $w^{n'}\in A_1$ with constant
$$[w]_{A_{1,n'}}=[w^{n'}]_{A_1}=\left\|\frac{M(w^{n'})}{w^{n'}}\right\|_{L^\infty(\R^n)},$$
then letting $d\mu=w^{n'}dx$ we obtain 
$$\|I_1 g\|_{L^{n',\infty}(w^{n'})}\leq C\int_{\R^n}|g|M(w^{n'})^{\frac{1}{n'}}\leq C[w]^{\frac1{n'}}_{A_{1,n'}}\int_{\R^n} |g| w.$$
and this bound is sharp (see \cite{LMPT}).  We have the following corollary.

\begin{corollary}  If $\mu$ is a meausure and $T$ is any operator as in the statement of Corollary \ref{maintwocor}, then
\begin{equation}\label{weakn} \|Tf\|_{L^{n',\infty}(\mu)}\leq C\int_{\R^n}|\nabla f|(M\mu)^{\frac1{n'}}\end{equation}
for any $f\in C^\infty_c(\R^n)$.  In particular, for $w\in A_{1,n'}$
$$ \|Tf\|_{L^{n',\infty}(w^{n'})}\leq C[w]^\frac{1}{n'}_{A_{1,n'}}\int_{\R^n}|\nabla f|w. $$
\end{corollary}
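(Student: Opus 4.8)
The plan is to combine Theorem~\ref{main} (and the pointwise bounds recorded for the other operators, collected in Corollary~\ref{maintwocor}) with the weak-type estimate for $I_1$ established in the display preceding the corollary. First I would fix $f \in C_c^\infty(\R^n)$ and let $T$ be any of the operators listed in Corollary~\ref{maintwocor}. By the pointwise domination $|Tf(x)| \leq C I_1(|\nabla f|)(x)$ valid for all such $T$, and since $\|\cdot\|_{L^{n',\infty}(\mu)}$ is equivalent to a norm (because $n' > 1$, exactly as used a few lines above in the excerpt), we get the monotonicity
$$\|Tf\|_{L^{n',\infty}(\mu)} \leq C\,\|I_1(|\nabla f|)\|_{L^{n',\infty}(\mu)}.$$
Then I would apply the already-derived Minkowski-integral-inequality bound
$$\|I_1 g\|_{L^{n',\infty}(\mu)} \leq C\int_{\R^n}|g|\,(M\mu)^{\frac1{n'}}$$
with $g = |\nabla f| \geq 0$, which immediately yields \eqref{weakn}.

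For the second, sharper statement, I would specialize $\mu$ to the measure $d\mu = w^{n'}\,dx$ with $w \in A_{1,n'}$, so that $M\mu = M(w^{n'})$ pointwise and, by the very definition of the $A_{1,n'}$ constant, $M(w^{n'}) \leq [w]_{A_{1,n'}} w^{n'}$ a.e. Plugging this into \eqref{weakn} gives
$$\|Tf\|_{L^{n',\infty}(w^{n'})} \leq C\int_{\R^n}|\nabla f|\,(M(w^{n'}))^{\frac1{n'}} \leq C[w]_{A_{1,n'}}^{\frac1{n'}}\int_{\R^n}|\nabla f|\,w,$$
which is the claimed inequality. The extension from $f \in C_c^\infty(\R^n)$ to all of $\dot W^{1,1}(v)$ (when relevant) is routine by density, since the right-hand side is continuous in the $\dot W^{1,1}$ seminorm and weak-$L^{n'}$ quasi-norm bounds pass to limits along Cauchy sequences; I would only remark on this rather than carry it out.

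There is essentially no obstacle here: the corollary is a direct concatenation of Theorem~\ref{main}, the norm-equivalence of weak $L^{n'}$, and the Minkowski-inequality estimate for $I_1$ that is displayed in the surrounding text. The only point requiring a line of care is the assertion that $\|\cdot\|_{L^{n',\infty}(\mu)}$ satisfies a genuine triangle inequality (up to a constant) so that the pointwise bound $|Tf| \leq C I_1(|\nabla f|)$ transfers to the weak quasi-norm; this is standard for $L^{p,\infty}$ with $p>1$ and is in any case already invoked earlier in Section~\ref{endpt}, so I would simply cite it. Thus the proof is two short displays plus the observation that the listed operators all obey the governing pointwise inequality.

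\begin{proof}
Let $f\in C_c^\infty(\R^n)$ and let $T$ be any of the operators in the statement of Corollary~\ref{maintwocor}. Each such operator satisfies the pointwise bound $|Tf(x)|\leq CI_1(|\nabla f|)(x)$: for $M$, $M^k$, $M_{L^{n',1}}$, $\mathcal M_{\Sn^{n-1}}$ and $M_\Omega$ with $\Omega\in L^{n,\infty}(\Sn^{n-1})$ this is recorded in the introduction and in Corollary~\ref{MOmega}, while for $T_\Omega$ and $T^\star_\Omega$ with $\Omega\in L^{n,\infty}(\Sn^{n-1})$ of mean zero it is Theorem~\ref{main} (note $|T_\Omega f|\leq T^\star_\Omega f$). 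Since $n'>1$, the functional $\|\cdot\|_{L^{n',\infty}(\mu)}$ is equivalent to a norm, hence monotone, and so
$$\|Tf\|_{L^{n',\infty}(\mu)}\leq C\|I_1(|\nabla f|)\|_{L^{n',\infty}(\mu)}.$$
Applying the bound $\|I_1 g\|_{L^{n',\infty}(\mu)}\leq C\int_{\R^n}|g|(M\mu)^{\frac1{n'}}$ established above with $g=|\nabla f|\geq 0$ gives \eqref{weakn}:
$$\|Tf\|_{L^{n',\infty}(\mu)}\leq C\int_{\R^n}|\nabla f|(M\mu)^{\frac1{n'}}.$$
Now take $d\mu=w^{n'}\,dx$ with $w\in A_{1,n'}$, so $M\mu=M(w^{n'})$. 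By definition of $[w]_{A_{1,n'}}=[w^{n'}]_{A_1}$ we have $M(w^{n'})\leq [w]_{A_{1,n'}}w^{n'}$ a.e., whence
$$\|Tf\|_{L^{n',\infty}(w^{n'})}\leq C\int_{\R^n}|\nabla f|\,M(w^{n'})^{\frac1{n'}}\leq C[w]_{A_{1,n'}}^{\frac1{n'}}\int_{\R^n}|\nabla f|\,w,$$
which is the asserted inequality.
\end{proof}
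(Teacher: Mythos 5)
Your proof is correct and follows exactly the route the paper intends: apply the uniform pointwise bound $|Tf|\leq CI_1(|\nabla f|)$ from Theorem~\ref{main} and the other displayed pointwise estimates, transfer it through the weak $L^{n'}$ quasi-norm (which is monotone, being equivalent to a norm since $n'>1$), and then invoke the Minkowski-inequality estimate for $I_1$ already displayed just before the corollary, specializing afterwards to $d\mu=w^{n'}dx$. Nothing further is needed.
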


Inequality \eqref{weakn} is the weak-type estimate in $L^{q,\infty}$ for $q>1$, namely $q=n'=\frac{n}{n-1}$, and does not give any information about the $L^{1,\infty}$ behavior. Carro, the second author, Soria, and Soria \cite{CPSS} studied the weak-type (1,1) behavior of $I_\al$. They showed that the inequality $$\|I_\al g\|_{L^{1,\infty}(w)}\leq C\|g\|_{L^1(M_\al w)}$$
does \emph{not} hold in general and proved the following replacement using logarithmic maximal functions.  Given any Young function $\Phi$ let
$$M_{L^\Phi}f(x)=\sup_{Q\ni x}\|f\|_{L^\Phi(Q)}.$$
If $\Phi(t)=t\log(\mathsf{e}+t)^\ep$ we write $M_{L^\Phi}=M_{L(\log L)^\ep}$.

\begin{theorem}[\cite{CPSS}] Suppose $w$ is a weight and $0<\al<n$.  Then for any $\ep>0$
$$\|I_\al g\|_{L^{1,\infty}(w)}\leq C\int_{\R^n}|g| M_\al(M_{L(\log L)^\ep} w).$$
\end{theorem}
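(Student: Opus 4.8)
The plan is to follow the argument of Carro, P\'erez, Soria, and Soria. By the linearity of $I_\al$ and the scaling $g\mapsto \lambda g$ it suffices to prove the normalized estimate
\[
w\big(\{x\in\R^n: I_\al g(x)>1\}\big)\le C\int_{\R^n}|g|\,M_\al\big(M_{L(\log L)^\ep}w\big),
\]
and we may assume $g\ge 0$ and, by a density/limiting argument, that $g$ is bounded with compact support, so $g\in L^1(\R^n)$. First I would pass to a discrete model: for $g\ge 0$ one has $I_\al g(x)\lesssim \sum_{k\in\Z}2^{k\al}\dashint_{B(x,2^k)}g$ (equivalently a sparse fractional sum $\sum_{Q\in\mathcal S}\ell(Q)^\al\langle g\rangle_Q\mathbf 1_Q$), so it is enough to bound the weak-$L^1(w)$ quasinorm of this sum, and correspondingly $M_\al w(x)\simeq\sup_k 2^{k\al}\dashint_{B(x,2^k)}w$.

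The core is a Calder\'on--Zygmund decomposition of $g$ at height $1$: let $\{Q_j\}$ be the maximal dyadic cubes with $\langle g\rangle_{Q_j}>1$, so $g\le 1$ a.e.\ on $F:=\big(\bigcup_jQ_j\big)^c$, the $Q_j$ are pairwise disjoint, and $\sum_j|Q_j|\le\int g$. Split $g=g\mathbf 1_F+\sum_j g\mathbf 1_{Q_j}=:g_0+\sum_j b_j$. For the good part $g_0$ (which satisfies $0\le g_0\le1$) I would apply Chebyshev at an exponent $q$ slightly below the critical value $(n/\al)'$ together with a weighted strong-type inequality for $I_\al$, then let $q\uparrow(n/\al)'$; the blow-up of the constant in this subcritical estimate, summed Yano-style (equivalently: split $g=g\mathbf 1_{g\le t}+g\mathbf 1_{g>t}$ and optimize over $t$), is exactly what manufactures the inner maximal function $M_{L(\log L)^\ep}w$, while the elementary bound $g_0^q\le g_0\le g$ turns $\int g_0^q(\cdots)$ into a multiple of $\int g\,M_\al(M_{L(\log L)^\ep}w)$. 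For the bad part, the contribution of $I_\al b_j$ on $\R^n\setminus 3Q_j$ is, by the kernel size, pointwise comparable to $\ell(Q_j)^\al\langle g\rangle_{Q_j}$ on each dyadic dilate of $Q_j$, giving a geometrically summable series; since $\ell(Q_j)^\al\langle g\rangle_{Q_j}\,w(cQ_j)\lesssim \int_{Q_j}g\,M_\al w$ and the $Q_j$ are disjoint, this is absorbed into the right-hand side. The local pieces $I_\al b_j$ on $3Q_j$ are controlled by the local weak-type bound $I_\al:L^1(3Q_j)\to L^{(n/\al)',\infty}(3Q_j)$ paired against $w$, which is precisely what forces the local $L(\log L)^\ep$ average of $w$, again dominated by $M_\al(M_{L(\log L)^\ep}w)$.

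The main obstacle is the endpoint interplay: at the natural exponent $q=n/(n-\al)$ the operator $I_\al$ is only of weak type, so the good part cannot be handled by a naive $L^q$ estimate, and one must run the subcritical argument and optimize; producing the logarithm with an \emph{arbitrarily small} power $\ep$ (rather than a fixed, larger power) is the quantitatively delicate step. These details are carried out in \cite{CPSS}, to which we refer the reader.
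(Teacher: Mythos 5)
This theorem appears in the paper only as a quoted result attributed to \cite{CPSS}; the paper contains no proof of it, so there is no ``paper's own proof'' against which to compare your attempt. Your proposal is, in effect, an annotated citation: a plausible-sounding reconstruction of a Calder\'on--Zygmund/Yano-extrapolation strategy whose genuinely delicate steps you explicitly hand back to \cite{CPSS}. That is consistent with how the paper treats the statement, but it does not constitute an independent verification.

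Reading your sketch on its own terms, a few places are stated as if routine but are really the whole theorem. You invoke ``a weighted strong-type inequality for $I_\al$'' at an exponent $q$ just below $(n/\al)'$ and claim that Yano-type summation of the blow-up produces exactly $M_\al\big(M_{L(\log L)^\ep}w\big)$; you never identify which two-weight strong $L^q$ estimate you mean, what its sharp $q$-dependence is, or why the composition $M_\al\circ M_{L(\log L)^\ep}$ (rather than, say, a single fractional Orlicz maximal operator) emerges on the right. Likewise, for the local bad parts you assert that pairing the local weak-type $L^1\to L^{(n/\al)',\infty}$ bound ``against $w$ \dots forces the local $L(\log L)^\ep$ average of $w$'' --- that is precisely where the logarithm is born, and it is left entirely unexplained. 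The elementary reductions you do carry out ($g_0^q\le g_0\le g$, the off-diagonal decay of $I_\al b_j$ with $\ell(Q_j)^\al\langle g\rangle_{Q_j}w(cQ_j)\lesssim\int_{Q_j}g\,M_\al w$) are fine, but they are the easy part. As written, the proposal is a reasonable road map into \cite{CPSS}, not a proof, and since the present paper also defers entirely to \cite{CPSS}, there is nothing further to compare.
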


Hence we have the following corollary.

\begin{corollary} \label{endpointbdd} For any $\ep>0$ and any operator $T$ as in the statement of Corollary \ref{maintwocor}, we have
\begin{equation}\label{endpointbddineq}\|Tf\|_{L^{1,\infty}(w)}\leq C\int_{\R^n}|\nabla f| M_1(M_{L(\log L)^\ep} w)\end{equation}
for any $f\in C^\infty_c(\R^n)$.
\end{corollary}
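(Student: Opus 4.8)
The plan is to deduce the corollary directly from the pointwise inequality $|Tf|\leq CI_1(|\nabla f|)$ together with the weak-type $(1,1)$ estimate for $I_\al$ of Carro, the second author, Soria, and Soria quoted just above. Recall that every operator $T$ appearing in the statement of Corollary \ref{maintwocor} satisfies
$$|Tf(x)|\leq CI_1(|\nabla f|)(x),\qquad f\in C^\infty_c(\R^n),$$
this being the content of Theorem \ref{main} in the case of $T_\Omega$ and $T^\star_\Omega$, and of the pointwise bounds \eqref{pointmax}, \eqref{maxiteratedptwise}, \eqref{lorentz}, and the spherical maximal function estimate in the remaining cases.

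First I would fix $f\in C^\infty_c(\R^n)$ and set $g=|\nabla f|$. Since $f$ is smooth with compact support, $g$ is nonnegative, bounded and compactly supported, so $g\in L^1(\R^n)$ and the hypotheses of the quoted theorem are met with $\al=1$. Because $n\geq 2$ we have $n'>1$, so $h\mapsto\|h\|_{L^{1,\infty}(w)}$ is monotone with respect to pointwise domination (indeed it is equivalent to a norm), and the pointwise bound therefore gives
$$\|Tf\|_{L^{1,\infty}(w)}\leq C\,\|I_1 g\|_{L^{1,\infty}(w)}.$$
Next, applying the theorem of Carro, the second author, Soria, and Soria to $g$ with $\al=1$ yields, for every $\ep>0$,
$$\|I_1 g\|_{L^{1,\infty}(w)}\leq C\int_{\R^n}|g|\,M_1\big(M_{L(\log L)^\ep}w\big)=C\int_{\R^n}|\nabla f|\,M_1\big(M_{L(\log L)^\ep}w\big),$$
and chaining the last two displays produces \eqref{endpointbddineq}.

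There is essentially no obstacle here: the endpoint bound for the operators $T$ is inherited verbatim from the corresponding endpoint bound for $I_1$, exactly as the strong and two-weight inequalities of the previous sections were. The only point worth a word is that $L^{1,\infty}(w)$ is a priori only quasi-normed; but the argument uses nothing beyond monotonicity of $\|\cdot\|_{L^{1,\infty}(w)}$ under pointwise domination of nonnegative functions, which always holds, so no triangle inequality is required.
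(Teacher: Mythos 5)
Your proof is correct and is exactly the derivation the paper has in mind: combine the pointwise bound $|Tf|\leq C I_1(|\nabla f|)$ with the Carro--P\'erez--Soria--Soria weak $(1,1)$ estimate for $I_1$, using only monotonicity of the $L^{1,\infty}(w)$ quasi-norm under pointwise domination. One small slip: the parenthetical ``indeed it is equivalent to a norm'' is false for $L^{1,\infty}$ (the remark about $n'>1$ belongs to the $L^{n',\infty}$ endpoint, not this one), but since you correctly observe at the end that only monotonicity is used, this does not affect the argument.
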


\vspace{3mm}


\begin{thebibliography}{99}


\bibitem{AHP} J.Alvarez, J. Hounie, and C. P\'erez, \emph{A pointwise estimate for the kernel of a pseudo- differential operator with applications}. Rev. Un. Mat. Argentina {\bf 37} (1991), 184--199.

\bibitem{AIM} K. Astala, T. Iwaniec, and G. Martin. Elliptic partial differential equations and quasiconformal mappings in the plane. Princeton Mathematical Series, 48. Princeton University Press, Princeton, NJ, 2009. 

\bibitem{Buck} S. Buckley, \emph{Estimates for operator norms on weighted spaces and reverse Jensen inequalities}. Trans. Amer. Math. Soc. {\bf 340} (1993), 253--272.

\bibitem{BMO} A. Bosch-Cam\'os, J. Mateu, and J. Orobitg, \emph{$L^p$ estimates for the maximal singular integral in terms of the singular integral}. J. Anal. Math. {\bf 126} (2015), 287--306.

\bibitem{CZ} A. Calder\'on and A. Zygmund, \emph{On singular integrals}. Amer. J. Math. {\bf 78} (1956), 289--309.

\bibitem{CPSS} M.J. Carro, C. P\'erez, Carlos, F. Soria, and J. Soria, \emph{Maximal functions and the control of weighted inequalities for the fractional integral operator}. Indiana Univ. Math. J. {\bf 54} (2005), 627--644.


\bibitem{COV2} C. Cascante, J. Ortega, and I. Verbitsky, \emph{On $L^p$-$L^q$ trace inequalities}. J. London Math. Soc. (2) {\bf 74} (2006), 497--511.

\bibitem{CLPR} E. Cejas, K. Li, C. P\'erez, and I. Rivera-R\'{\i}os, {\em Vector-valued operators, optimal weighted estimates and the $C_p$ condition}. Sci. China Math.  (2020) {\bf 63} 1339--1368.

\bibitem{CWW} A. Chang, J.M. Wilson, and T.H. Wolff, \emph{Some weighted norm inequalities concerning the Schrödinger operators}. Comment. Math. Helv. {\bf 60} (1985), 217--246.



\bibitem{Cr} M. Christ, \emph{Weak type (1, 1) bounds for rough operators}. Ann. of Math. (2) {\bf 128} (1988), 19--42.

\bibitem{CRdF} M. Christ and J.L. Rubio de Francia, \emph{Weak type (1, 1) bounds for rough operators. II}. Invent. Math.
{\bf 93} (1988), 225--237.

\bibitem{CF} R. Coifman and C. Fefferman, \emph{Weighted norm inequalities for maximal functions and singular integrals}. Studia Math. {\bf 51} (1974), 241--250.

\bibitem{CCDPO} J. Conde-Alonso, A. Culiuc, F. Di Plinio, and Y. Ou, 
\emph{A sparse domination principle for rough singular integrals}. Anal. PDE {\bf 10} (2017), 1255--1284. 

\bibitem{CGG} M. Cowling, J. Garc\'ia-Cuerva, and H. Gunawan, \emph{Weighted estimates for fractional maximal functions related to spherical means}. Bull. Austral. Math. Soc. {\bf 66} (2002), 75--90.


\bibitem{CMP} D. Cruz-Uribe, J.M. Martell, and C. P\'erez, \emph{Weights, extrapolation and the theory of Rubio de Francia}. Operator Theory: Advances and Applications, Birkh\"auser/Springer Basel AG, Basel, 215 2011.

\bibitem{CM} D. Cruz-Uribe and K. Moen, \emph{Sharp norm inequalities for commutators of classical operators}. Publ. Mat. {\bf 56} (2012), 147--190.

\bibitem{CM1} D. Cruz-Uribe and K. Moen, \emph{One and two weight norm inequalities for Riesz potentials}. Illinois J. Math. {\bf 57} (2013),  295--323.

\bibitem{CM2} D. Cruz-Uribe and K. Moen, \emph{A fractional Muckenhoupt-Wheeden theorem and its consequences}.  Integral Equations Operator Theory {\bf 76} (2013), 421--446.

\bibitem{CP} D. Cruz-Uribe and C. P\'erez, {\em On the two-weight problem
for singular integral operators}.  Ann. Sc. Norm. Super. Pisa Cl. Sci, (5) 1 (2002), 821--849.

\bibitem{Duobook} J. Duoandikoetxea, Fourier analysis. Translated and revised from the 1995 Spanish original by David Cruz-Uribe. Graduate Studies in Mathematics, 29. American Mathematical Society, Providence, RI, 2001.

\bibitem{Duo} J. Duoandikoetxea, \emph{Weighted norm inequalities for homogeneous singular integrals}. Trans. Amer. Math. Soc. {\bf 336} (1993), no. 2, 869--880.

\bibitem{Duo2} J. Duoandikoetxea, \emph{Extrapolation of weights revisited: new proofs and sharp bounds}. J. Funct. Anal. {\bf 260} (2011), 1886--1901.

\bibitem{DRdF} J. Duoandikoetxea and J.L. Rubio de Francia, \emph{Maximal and singular integral operators via Fourier
transform estimates}. Invent. Math. {\bf 84} (1986), no. 3, 541--561.



\bibitem{fefferman} C. Fefferman, \emph{Inequalities for strongly singular convolution operators}. Acta Math, {\bf 124} 1970, 9--36.


\bibitem{Fef} C. Fefferman, \emph{The uncertainty principle}. Bull. Amer. Math. Soc. {\bf 9} (1983), 129--206. 

\bibitem{FS} C. Fefferman and E.M. Stein, \emph{$H^p$ spaces of several variables}. Acta Math. {\bf 129} (1972), no. 3-4, 137--193.

\bibitem{GT} D. Gilbarg and N. Trudinger. Elliptic partial differential equations of second order. Reprint of the 1998 edition. Classics in Mathematics. Springer-Verlag, Berlin, 2001.

\bibitem{Haj} P. Haj\st{l}asz, \emph{Sobolev inequalities, truncation method, and John domains}. In: Papers on Analysis, Rep. Univ. Jyv\"askyl\"a Dep. Math. Stat. 83, Univ. Jyv\"askyl\"a, 2001, 109--126.

\bibitem{HL} P. Haj\st{l}asz and Z. Liu, \emph{Maximal potentials, maximal singular integrals, and the spherical maximal function}. Proc. Amer. Math. Soc. {\bf 142} (2014), 3965--3974.

\bibitem{HL2} P. Haj\st{l}asz and Z. Liu, \emph{Sobolev spaces, Lebesgue points and maximal functions}. J. Fixed Point Theory Appl. {\bf 13} (2013), no. 1, 259--269.

\bibitem{HHL} T. H\"anninen, T. Hyt\"onen, and K. Li, \emph{Two-weight $L^p$-$L^q$ bounds for positive dyadic operators: unified approach to $p\leq q$ and $p>q$}. Potential Anal. {\bf 45} (2016), no. 3, 579–608.



\bibitem{H} S. Hofmann, \emph{Weak (1,1) boundedness of singular integrals with nonsmooth kernel}. Proc. Amer. Math. Soc. {\bf 103} (1988), no. 1, 260–264.


\bibitem{Hon1} P. Honz\'ik, \emph{An example of an unbounded maximal singular operator}. J. Geom. Anal. {\bf 20} (2010), no. 1, 153--167.

\bibitem{Hon2} P. Honz\'ik, \emph{An endpoint estimate for rough maximal singular integrals}. Int. Math. Res. Not. IMRN 2020, no. 19, 6120--6134.

\bibitem{hormander67} H\"ormander, \emph{H\"ormander L. Pseudo-differential operators and hypoelliptic equations}. Proc. Sympos Pure Math, 1967, 138--183.


\bibitem{HMPV} R. Hurri-Syrj\"anen, J. Mart\'inez , C. P\'erez and A. V\"ah\"akangas. 
{\em On the BBM-phenomenon in fractional Poincar\'e-Sobolev inequalities with weights},
Internat. Math. Res. Notices, 2022; https://doi.org/10.1093/imrn/rnac246.


\bibitem{Hyt1} T. Hyt\"onen, \emph{The sharp weighted bound for general Calder\'on-Zygmund operators.} Ann. of Math. (2) {\bf 175} (2012), no. 3, 1473–1506.

\bibitem{Hyt2} T. Hyt\"onen, \emph{The $A_2$ theorem: remarks and complements} Harmonic analysis and partial differential equations, 91–106, Contemp. Math., 612, Amer. Math. Soc., Providence, RI, 2014.

 \bibitem{HyPe} T. Hyt\"onen and C. P\'erez, \emph{Sharp weighted bounds involving $A_\infty$}. Anal. PDE {\bf 6} (2013), no. 4, 777--818.
 
 \bibitem{HyL} T. Hyt\"onen and M. Lacey, \emph{The $A_p-A_\infty$ inequality for general Calder\'on-Zygmund operators}. Indiana Univ. Math. J. {\bf 61} (2012), 2041--2092.
 
 \bibitem{HyLP} T. Hyt\"onen, M. Lacey and C. P\'erez, \emph{Sharp weighted bounds for the $q$-variation of
singular integrals}, Bulletin London Math. Soc. {\bf 45} (2013) 529--540.
 
\bibitem{HRT} T. Hyt\"onen, L. Roncal, and O. Tapiola, \emph{Quantitative weighted estimates for rough homogeneous singular integrals}, Israel J. Math. {\bf 218} (2017), 133--164.

 
 \bibitem{KerSaw} R. Kerman and E. Sawyer, \emph{The trace inequality and eigenvalue estimates for Schr\"odinger operators}. Ann. Inst. Fourier (Grenoble) {\bf 36} (1986) 207--228.

\bibitem{Kin} J. Kinnunen, \emph{The Hardy-Littlewood maximal function of a Sobolev function}. Israel J. Math. {\bf 100} (1997), 117--124. 

\bibitem{KLV} J. Kinnunen, J. Lehrb\"ack, and V\"ah\"akangas, {Maximal function methods for Sobolev spaces}, Mathematical Surveys and Monographs, Volume 257, 2021.

\bibitem{LMPT} M.T. Lacey, K. Moen, C. Pérez, and R.H. Torres, \emph{Sharp weighted bounds for fractional integral operators.} J. Funct. Anal. {\bf 259} (2010), no. 5, 1073--1097.

\bibitem{Lac} M. Lacey, \emph{Sparse bounds for spherical maximal functions}. J. Anal. Math. {\bf 139} (2019), 613--635.

\bibitem{LSU} M. Lacey, E. Sawyer, and I. Uriarte-Tuero, \emph{Two weight inequalities for discrete positive operators}. \textsf{arXiv:0911.3437}.


\bibitem{LPR} T. Luque, C. P\'erez, and E. Rela, \emph{Optimal exponents in weighted estimates without examples} Math. Res. Lett. {\bf 22}, (2015), 1--19, 2015

\bibitem{Ler} A. Lerner, \emph{A note on the Coifman-Fefferman and Fefferman-Stein inequalities}. Ark. Mat. {\bf 58} (2020), no. 2, 357--367.

\bibitem{LPRR} K. Li, C. P\'erez, I. Rivera-R\'ios, and L. Roncal, \emph{Weighted norm inequalities for rough singular integral operators.} J. Geom. Anal. {\bf 29} (2019), no. 3, 2526--2564. 

 \bibitem{LN} R. Long and F. Nie, {Weighted Sobolev inequalities and eigenvalue estimate of Schr\"odinger operators}. Lecture Notes in Math., vol. 1494, 1990, pp. 131--141. 
 
 \bibitem{MP} J. Mal\'y and L. Pick, \emph{An elementary proof of sharp Sobolev embeddings}. Proc. Amer. Math. Soc. {\bf 130} (2002), no. 2, 555--563.
 
 \bibitem{MV}  J. Mateu and J. Verdera, \emph{$L^p$ and weak $L^1$ estimates for the maximal Riesz transform and the maximal Beurling transform}. Math. Res. Lett. {\bf 13} (2006), no. 5-6, 957--966. 
 
\bibitem{MOPV} J. Mateu, J. Orobitg, C P\'erez, and J. Verdera, \emph{New estimates for the maximal singular integral}. Int. Math. Res. Not. IMRN 2010, no. 19, 3658--3722. 

\bibitem{MOV} J. Mateu, J. Orobitg, and J. Verdera, \emph{Estimates for the maximal singular integral in terms of the singular integral: the case of even kernels}. Ann. of Math. (2) {\bf 174} (2011), no. 3, 1429--1483

\bibitem{MW} B. Muckenhoupt and R. Wheeden, \emph{Weighted norm inequalities for fractional integrals}. Trans. Amer. Math. Soc. {\bf 192} (1974) 261–274.


\bibitem{Maz} V. Maz'ya, \emph{Sobolev spaces with applications to elliptic partial differential equations.} Second, revised and augmented edition. Grundlehren der mathematischen Wissenschaften Fundamental Principles of Mathematical Sciences, 342.

\bibitem{ON} R. O'Neil, \emph{Convolution operators and L(p,q) spaces}. Duke Math. J. {\bf 30} 1963, 129--142.

\bibitem{Pee} J. Peetre, \emph{Espaces d’interpolation et th\`eor\'eme de Soboleff}. Ann. Inst. Fourier {\bf 16} 1966, 279--
317.

\bibitem{Per95a} C. P\'erez, \emph{On sufficient conditions for the boundedness of the Hardy--Littlewood maximal operator between weighted $L^{p}$--spaces with different weights}. Proc. London Math. Soc. {\bf 71} (1995), 135--157.


\bibitem{Per95} C. P\'erez, \emph{Sharp $L^{p}$--weighted Sobolev inequalities}. Ann. Inst. Fourier (Grenoble) {\bf 45} (1995), 1--16.

\bibitem{Per94} C. P\'erez, \emph{Two weighted inequalities for potential and fractional type maximal operators}. Indiana Univ. Math. J. {\bf 43} (1994), no. 2, 663--683.

\bibitem{Per97} C. P\'erez, \emph{Sharp estimates for commutators of singular integrals via iterations of the Hardy-Littlewood maximal function.} J. Fourier Anal. Appl. {\bf 3} (1997), 743--756. 


\bibitem{PR} C. P\'erez and E. Rela, \emph{Degenerate Poincar\'e-Sobolev inequalities}. Trans. Amer. Math. Soc. {\bf 372} (2019), 6087--6133.

\bibitem{PW} C. P\'erez and R. L. Wheeden, \emph{Potential operators, maximal functions, and generalizations of $A_\infty$}. Potential analysis {\bf 19} (2003), 1--33.


\bibitem{phong-stein85} D. Phong and E. Stein, \emph{Hilbert integrals, singular integrals and Radon transforms}. Acta Math. {\bf 179} (1985), 99--157



\bibitem{R} J. Recchi,   \emph{Mixed $A_1-A_\infty$ bounds for fractional integrals}. J. Math. Anal. Appl. {\bf 403} (2013), 283--296. 

\bibitem{RS} M. Reed and B. Simon, Methods of modern mathematical physics II: Fourier Analysis, Self-Adjointness. Academic Press, New York 1975.

\bibitem{Sal}  L. Saloff-Coste \emph{Aspects of Sobolev-Type Inequalities}, (London Mathematical Society Lecture Note Series) (2001).

\bibitem{Sawbook} E. Sawyer, Two weight norm inequalities for certain maximal and integral operators. In Harmonic analysis (Minneapolis, Minn., 1981), volume 908 of Lecture Notes in Math., pages 102–127. Springer, Berlin, 1982.

\bibitem{Saw1} E. Sawyer, \emph{A two weight weak type inequality for fractional integrals}. Trans. Amer. Math. Soc. {\bf  281} (1984), 339--345.

\bibitem{Saw2} E. Sawyer, \emph{A characterization of two weight norm inequalities for fractional and Poisson integrals}. Trans. Amer. Math. Soc. {\bf 308} (1988), 533--545.


\bibitem{SSV} A. Schikorra, D. Spector, and J. Van Schaftingen, \emph{An $L^1$-type estimate for Riesz potentials}, Rev. Mat. Iberoam, {\bf 33} (2017) 291--304.

\bibitem{Se} A. Seeger, \emph{Singular integral operators with rough convolution kernels}, J. Amer. Math. Soc. {\bf 9} (1996), 95--105.

\bibitem{Sp} D. Spector, \emph{An optimal Sobolev embedding for $L^1$}. J. Funct. Anal. {\bf 279} (2020), 108559, 26 pp.


\bibitem{Ste} E.M. Stein, \emph{Maximal functions. I. Spherical means.} Proc. Nat. Acad. Sci. U.S.A. {\bf 73} (1976), no. 7, 2174--2175.





\bibitem{Tan} H. Tanaka, \emph{A characterization of two-weight trace inequalities for positive dyadic operators in the upper triangle case}. Potential Anal. {\bf 41} (2014) 487--499.


\bibitem{Tor}  A. Torchinsky, \emph{Real-variable methods in harmonic analysis}. Pure and Applied Mathematics, 123. Academic Press, Inc., Orlando, FL, 1986.

\bibitem{Var} A.Vargas, \emph{Weighted weak type (1,1) bounds for rough operators}. J. London Math. Soc. {\bf 54} (1996), 297--310.

\bibitem{Ver} I.E. Verbitsky, \emph{Nonlinear potentials and trace inequalities}, Oper. Theory Adv. Appl. {\bf 110} (1999), 323--343.

\bibitem{Vuo} E. Vuorinen, \emph{$L^p(\mu)\rightarrow L^q(\nu)$ characterization for well localized operators.} J. Fourier Anal. Appl. {\bf 22} (2016), 1059--107.

\bibitem{Wat} D. Watson, \emph{Weighted estimates for singular integrals via Fourier transform estimates}. Duke Math. J. {\bf 60} (1990), 389--399. 



\end{thebibliography}
\end{document}